\newtheorem{Def}{Definition}[section]
\newtheorem{Thm}[Def]{Theorem}
\newtheorem{Lem}[Def]{Lemma}
\newtheorem{Prop}[Def]{Proposition}
\newtheorem{Cor}[Def]{Corollary}
\newtheorem{Rem}[Def]{Remark}
\newtheorem{Ex}[Def]{Example}
\numberwithin{equation}{section}
\begin{document}

\title{The 12-th roots of the discriminant of an elliptic curve and the torsion points}
\author{Kohei Fukuda, Sho Yoshikawa}
\date{}
\maketitle

\begin{abstract}
Given an elliptic curve over a field of characteristic different from 2,3,
its discriminant defines a $\mu_{12}$-torsor over the field.
In this paper,
we give an explicit description of this $\mu_{12}$-torsor in terms of the 3-torsion points and of the 4-torsion points on the given elliptic curve.
As an application, we generalize a result of Coates on the 12-th root of the discriminant of an elliptic curve.
\end{abstract}

\section{Introduction}
Let $E$ be an elliptic curve over a field $K$.
Then, the \textit{discriminant} $\Delta_E$ \textit{of} $E$ is defined to be a value in $K^{\times}/(K^{\times})^{12}$, 
which depends only on the isomorphism class of $E$.
 
To study $\Delta_E$, it suffices to consider $\Delta_E \mathrm{~mod~} (K^{\times})^n$ for $n=3,4$ separately, because $K^{\times}/(K^{\times})^{12}\simeq K^{\times}/(K^{\times})^3\times K^{\times}/(K^{\times})^4$.
In the rest of this introduction, we suppose that $n=3,4$ and that $K$ be a field of characteristic prime to $n$.
Then, we may focus on the $\mu_n$-torsor $\mu_n\sqrt[n]{\Delta_E}$ over $K$ consisting of $n$-th roots of $\Delta_E$  since $\Delta_E \mathrm{~mod~} (K^{\times})^n$ corresponds to the $\mu_n$-torsor  $\mu_n\sqrt[n]{\Delta_E}$ 
via the isomorphism $K^{\times}/(K^{\times})^n\cong\mathrm{H}^1(K,\mu_n)$ from Kummer theory. 

The goal of this paper is to give an explicit description of the $\mu_n$-torsor $\mu_n\sqrt[n]{\Delta_E}$ out of $E[n]$.
More precisely, we construct a  ${\bigwedge}^2 E[n]$-torsor $T_n(E[n])$ over $K$ and define a canonical isomorphism $w_n$ from $T_n(E[n])$ to the $\mu_n$-torsor $\mu_n\sqrt[n]{\Delta_E}$ over $K$ such that the following diagram is commutative:
\[
\begin{CD}
{\bigwedge}^2 E[n]\times T_n(E[n]) @ >  \textrm{action} >> T_n(E[n]) \\
@V e_n\times w_n  VV  @VV w_n V\\
\mu_n\times \mu_n \sqrt[n]{\Delta_E} @ > \textrm{action} >> \mu_n \sqrt[n]{\Delta_E},
\end{CD}
\]
where $e_n$ is the Weil pairing normalized as in Remark \ref{RemNor} and Remark \ref{TateE}.

We now describe the organization of this paper.

In Section 2, we give a brief review of several results on elliptic curves.
The results on Tate curves and modular curves will be crucial in the proof of our main theorem.

Sections 3 and 4 give a preparation for stating our main theorem.
Our first task is to construct a ${\bigwedge}^2 E[n]$-torsor $T_n(E[n])$,
which are explained in Section 3 as we mentioned above.
Since this construction only uses the fact $E[n]\cong \left( \mathbb{Z}/(n)\right)^2$,
Section 3 deals with an abstract free $\mathbb{Z}/(n)$-module of rank $2$.
The second task is to construct a bijection $w_n:T_n(E[n])\rightarrow \mu_n\sqrt[n]{\Delta_E}$,
which is done in Section 4. Also, we give a simple description of the action of $\bigwedge^2 E[n]$ on $T_n(E[n])$ when $E$ is a Tate curve.
The constructions of $w_n$ are based on [4] and [7].

Next, integrating the results of the previous two sections, we proceed to the main part of this paper:
in Section 5, we give the precise statement of the main theorem
and prove it.
In its proof, using the modular curves of level 3 and 4,  
we reduce a claim on general elliptic curves to a claim on a single Tate curve.
As such an elliptic curve, we choose the Tate curve over a Laurent series field $\mathbb{Q}((q))$.
This choice enables us to compute concretely both the map $w_n:T_n(E[n])\rightarrow \mu_n\sqrt[n]{\Delta_E}$ and the Weil pairing for the Tate curve.

In the last section, we give a consequence of our main theorem extending the result of Coates [2] in the following sense:  \vspace{1em}\\
\textbf{Corollary \ref{Coa}.}
 \textit{Let $E$ and $E'$ be elliptic curves over $K$ of characteristic $\mathrm{char}(K)\neq 2,3$,
and $\varphi :E\rightarrow E'$ be an isogeny over $K$.
If $d=\mathrm{deg} \varphi$ is prime to $12$, then we have $\Delta_E = (\Delta_{E'})^d$ in $K^{\times}/(K^{\times})^{12}$.}

\section*{Acknowledgment}
The authors wish to express their hearty gratitude to their advisor, Professor Takeshi Saito.
He not only gave them  a lot of suggestive advice,
but also patiently encouraged them when they were in trouble with their study.
Also, the authors sincerely appreciate Professor J.-P. Serre,
because originally he suggested the theme of this paper to Professor Saito.
This work was supported by the Program for Leading Graduate 
 Schools, MEXT, Japan.

\section*{Convention and Terminology}
Let $\varphi:G\rightarrow H$ be a homomorphism of groups and $f:X\rightarrow Y$ be a map, where $X$ (resp. $Y$) is  a $G$-set (resp. $H$-set).
We say that $f$ \textit{is compatible with} $\varphi$ if for any $x\in X$ and $g\in G$ we have
$f(g\cdot x)=\varphi(g)\cdot f(x)$.\\
Also, we often denote by $\omega$ (resp. $i$) a primitive cubic (resp. 4th) root of unity.

\section{Review on elliptic curves}

In this section, we review some results on elliptic curves we need in this paper.
We omit all the proofs of these results, and only give references. 
\subsection{Elliptic curves}
\begin{Def}
\label{Ell}
Let $S$ be a scheme.
An elliptic curve over S is a pair $(E,O)$ 
such that $E$ is a proper smooth scheme over $S$
whose geometric fibers are connected algebraic curves of genus $1$,
and that $O$ is a section of $E\rightarrow S$.
\end{Def}
If there is no risk of confusion, we omit to write $O$ and simply call $E$ an elliptic curve.
It is well-known as the Abel-Jacobi theorem (THEOREM 2.1.2, [3]) that every elliptic curve $(E,O)$ admits a structure of
a commutative group scheme with $O$ the unit section.
For each positive integer $N$,
we denote by $E[N]$ the $N$-torsion subgroup scheme of $E$.
\begin{Rem}
\label{FEt}
If $N$ is invertible on $S$,
then $E[N]$ is a finite $\acute{e}tale$ commutative group scheme.
If, in addition, $S$ is a spectrum of a field $K$, 
then we often identify the finite $\acute{e}tale$ commutative group scheme $E[N]$ over $K$
with the associated $G_K=\mathrm{Gal}(\bar{K}/K)$-module $E[N](\bar{K})$.
\end{Rem}
\begin{Def}
\label{D}
For a Weierstrass equation over a ring $A$ of the form
\[ y^2 + a_1xy + a_3y = x^3 + a_2x^2 + a_4x + a_6, \]
we define $b_i\in A \, (i=2,4,6,8)$ by
\begin{equation*}
 \begin{split}
 b_2 &= a_1^2+4a_2, \\
 b_4 &= a_3^2+4a_6, \\
 b_6 &= 2a_4+a_1a_3, \\
 b_8 &= a_1^2a_6+4a_2a_6-a_1a_3a_4+a_2a_3^2+a_4^2,
 \end{split}
\end{equation*}
and define the discriminant $\Delta$ of the Weierstrass equation by
\[ \Delta := 9b_2b_4b_6-b_2^2b_8-8b_4^3-27b_6^2. \].
\end{Def}
The following theorem actually characterizes the definition of elliptic curves.
\begin{Thm}
\label{EllCh}
Let $E/S$ be an elliptic curve.
Then there exists an affine covering $\{ U_i\}_{i\in I}$ of $S$ such that, for each $i\in I$, 
$E_{U_i}\, /U_i$ is given by a Weierstrass equation over $\Gamma (U_i, \mathcal{O})$ 
with the discriminant invertible on $\Gamma (U_i, \mathcal{O})$.
\end{Thm}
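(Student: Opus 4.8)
The plan is to reduce the statement to a purely local construction of a Weierstrass embedding, carried out after passing to a suitable affine cover of $S$. Write $\pi\colon E\to S$ and let $D=O(S)\subset E$ be the image of the unit section; since $\pi$ is smooth and $O$ is a section, $D$ is an effective relative Cartier divisor, finite flat of degree $1$ over $S$. The whole construction revolves around the invertible sheaves $\mathcal{O}_E(nD)$ for small $n$ and their pushforwards along $\pi$.

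First I would establish local freeness of $\pi_*\mathcal{O}_E(nD)$. On a geometric fiber $E_s$, a smooth connected curve of genus $1$, the divisor $nD_s$ has degree $n$, so Riemann--Roch gives $h^0(E_s,\mathcal{O}(nD_s))=n$ and $h^1=0$ for every $n\ge 1$; in particular $\pi_*\mathcal{O}_E=\mathcal{O}_S$ and $\pi_*\mathcal{O}(D)=\mathcal{O}_S$. By cohomology and base change (Grauert), the fiberwise vanishing of $H^1$ shows that each $\pi_*\mathcal{O}_E(nD)$ is locally free of rank $n$ with formation commuting with base change. Restricting to an affine open $U_i$ over which all these sheaves for $n\le 6$ are free, I can choose $x\in\Gamma(U_i,\pi_*\mathcal{O}(2D))$ and $y\in\Gamma(U_i,\pi_*\mathcal{O}(3D))$ whose images generate the rank-one quotients $\pi_*\mathcal{O}(2D)/\pi_*\mathcal{O}(D)$ and $\pi_*\mathcal{O}(3D)/\pi_*\mathcal{O}(2D)$; concretely, $x$ has a pole of order exactly $2$ and $y$ of order exactly $3$ along $D$ in each fiber.

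The key algebraic step is then to count poles. The seven elements $1,x,y,x^2,xy,x^3,y^2$ all lie in $\Gamma(U_i,\pi_*\mathcal{O}(6D))$, which is free of rank $6$, so they satisfy a linear relation over $\Gamma(U_i,\mathcal{O})$. Matching pole orders along $D$, only $x^3$ and $y^2$ attain order $6$, so both occur with unit coefficients; after rescaling $x$ and $y$ by units I normalize those coefficients to $1$ and bring the relation into the Weierstrass shape
\[
y^2+a_1xy+a_3y=x^3+a_2x^2+a_4x+a_6,\qquad a_j\in\Gamma(U_i,\mathcal{O}).
\]
I would then argue that $(1:x:y)$ defines a closed immersion $E_{U_i}\hookrightarrow\mathbb{P}^2_{U_i}$ with image this cubic: the sheaf $\mathcal{O}(3D)$ is relatively very ample of fiber degree $3$ with three global sections, so on each fiber the complete linear system $|3D_s|$ embeds $E_s$ as a plane cubic, and properness together with flatness and the fiberwise criterion for a closed immersion promote this to a closed immersion over $U_i$. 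Finally, smoothness of $\pi$ forces the discriminant to be a unit: over each point $s\in U_i$ the fiber is a smooth plane cubic, and a Weierstrass cubic over a field is smooth if and only if its discriminant $\Delta$ is nonzero, so $\Delta$ vanishes at no point of $U_i$, which is exactly the statement that $\Delta\in\Gamma(U_i,\mathcal{O})$ is invertible.

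The main obstacle I expect is the base-change bookkeeping: making precise that the pole-order conditions, the choices of $x$ and $y$, and the unit normalizations can all be made \emph{integrally over the base} rather than merely fiberwise, and that they survive restriction to a common affine trivializing cover. Equivalently, the delicate point is upgrading the fiberwise Riemann--Roch picture to the relative assertion that $(1:x:y)$ is a closed immersion over $U_i$ with the cubic as its image; this is where cohomology and base change, together with the fiberwise criterion for closed immersions (proper, flat, and a closed immersion on fibers), must be invoked with care. Once this is in place, the discriminant computation is routine given the fiberwise smoothness criterion.
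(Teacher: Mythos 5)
The paper does not prove this theorem itself but cites Katz--Mazur (2.2), and your argument --- pushing forward $\mathcal{O}_E(nD)$ along $\pi$, using fiberwise Riemann--Roch plus cohomology and base change to get local freeness of rank $n$, extracting $x$ and $y$ from the graded pieces, and finding the Weierstrass relation inside the rank-$6$ module $\pi_*\mathcal{O}(6D)$ --- is precisely the standard proof given in that reference. Your construction is correct, including the normalization of the $x^3$ and $y^2$ coefficients via the rank-one quotient $\pi_*\mathcal{O}(6D)/\pi_*\mathcal{O}(5D)$ and the deduction of invertibility of $\Delta$ from fiberwise smoothness.
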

\begin{proof}
See (2,2), [3]
\end{proof}
\begin{Rem}
\label{VarCh}
{\rm (1)
It is well-known (for example,  see [8]) that, for a Weierstrass equation, the only change of variables fixing $[0:1:0]$ and preserving the general Weierstrass form is
\begin{equation}
 \begin{split}
 x &= u^2X+r, \\
 y &= u^3Y+sX+t,
 \end{split}
\end{equation}
with $u\in A^\times$ and $r,s,t\in A$.
Let $b_i$ and $\Delta$ ( $b_i'$ and $\Delta'$, respectively) be the constants defined in Definition \ref{Ell} for the Weierstrass equation with coordinates $(x,y)$ ($(X,Y)$, respectively).
Then the above change of variable gives the following formula:
\begin{equation}
 \begin{split}
 u^2b_2' &= b_2+12r,\\
 u^4b_4' &= b_4+rb_2+6r^2, \\
 u^{12}\Delta' &= \Delta.
 \end{split}
\end{equation}
(2)
Suppose that 2 is invertible on a ring $A$.
Then, any Weierstrass equation can be made into the form
\[ y^2 = x^3 + a_2x^2 + a_4x + a_6 \]
over $A$, and  
\[ x = u^2X+r,\quad y = u^3Y\qquad (u\in A^\times, r\in A)\]
is the only change of variables fixing the point $[0:1:0]$ and preserving such a Weierstrass form.}
\end{Rem}
Let $E$ be an elliptic curve over a field $K$.
Take a Weierstrass equation for $E$ and consider its discriminant $\Delta$.
The Remark \ref{VarCh} (1) implies that the image of $\Delta$ in $K^{\times}/(K^{\times})^{12}$ only depends on $E$.
We denote it by $\Delta_E\in K^{\times}/(K^{\times})^{12}$,
and call it \textit{the discriminant of} $E$.
\subsection{The modular curve of level $r$ $\geq 3$}

In the following, we recall the representability of a moduli problem on elliptic curves.
This result will be used in the proof of our main theorem to reduce Proposition \ref{MT} to the Tate curve case (Lemma \ref{mu}). 
\begin{Thm}
\label{modular}
For a positive integer $r$,
let $\mathcal{M}(r): \mathit{Sch}/\mathbb{Z}[1/r]\rightarrow \mathit{Sets}$ be a functor defined as follows;
for a scheme $S$ over $\mathbb{Z}[1/r]$, $\mathcal{M}(r)(S)$ is the set of isomorphism classes of the pair $(E,\alpha )$,
where $E$ is an elliptic curve over S and $\alpha : \bigl( \mathbb{Z}/(r)\bigr)^2 \xrightarrow{~\cong~} E[r]$ is an isomorphism of group schemes over S.

(1) If $r=1$, then the morphism $j:\mathcal{M}\rightarrow \mathbb{A_Z}^1$ taking the $j$-invariant makes $\mathbb{A_Z}^1$ into the coarse moduli of $\mathcal{M}:=\mathcal{M}(1)$.
 
(2) If $r\geq 3$, then $\mathcal{M}(r)$ has the fine moduli  
$Y(r)$, which is a connected smooth curve over $\mathbb{Z}[1/r]$.
\end{Thm}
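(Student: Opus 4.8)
The plan is to handle the two parts by the standard machinery for moduli of elliptic curves, the heart of the matter being the passage from a moduli \emph{problem} to a representing \emph{scheme}. For part (1), I would first recall the classification over an algebraically closed field $k$: by the change-of-variable formulas of Remark \ref{VarCh} two elliptic curves over $k$ are isomorphic if and only if they have the same $j$-invariant (the $b_i$-formalism of Definition \ref{D} covers characteristics $2,3$ as well), and every value of $k$ is realized. Thus the $j$-invariant of the universal Weierstrass family of Theorem \ref{EllCh} induces a natural transformation $\mathcal{M}\to h_{\mathbb{A}^1_{\mathbb{Z}}}$ that is bijective on $k$-points. To conclude that $\mathbb{A}^1_{\mathbb{Z}}$ is a coarse moduli space I would verify the remaining defining condition, namely that $j$ is initial among natural transformations from $\mathcal{M}$ to representable functors; bijectivity on geometric points together with the reducedness of $\mathbb{A}^1_{\mathbb{Z}}$ forces any competing transformation to factor uniquely through $j$.

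For part (2), the decisive input is \emph{rigidity}. For $r\ge 3$ the automorphism group of any elliptic curve over an algebraically closed field injects into $\mathrm{Aut}(E[r])=\mathrm{GL}_2(\mathbb{Z}/(r))$: already $[-1]$ acts nontrivially on the $r$-torsion, and no nontrivial automorphism can fix $E[r]$ pointwise once $r\ge 3$. Consequently an object $(E,\alpha)$ of $\mathcal{M}(r)$ has no nontrivial automorphisms, so $\mathcal{M}(r)$ is a rigidified moduli problem. I would then invoke the general principle that such a problem, which satisfies fppf descent and is locally representable, is representable by a scheme: concretely, over $\mathbb{Z}[1/r]$ the level datum $\alpha$ is étale by Remark \ref{FEt}, and rigidifying the affine Weierstrass charts of Theorem \ref{EllCh} by $\alpha$ kills the automorphisms, so the charts glue to a scheme $Y(r)$ representing $\mathcal{M}(r)$.

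Smoothness of $Y(r)$ over $\mathbb{Z}[1/r]$ I would read off from deformation theory: deformations of an elliptic curve are unobstructed since the relevant obstruction group vanishes for a smooth proper curve, and when $r$ is invertible the full level-$r$ structure is an étale datum that imposes no further obstruction, yielding relative dimension $1$. This reduces everything to the one genuinely delicate point, connectedness, which I treat via the Weil pairing.

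The main obstacle is \emph{connectedness}. Over $\bar{\mathbb{Q}}$ the set of geometric components of $Y(r)$ is indexed by the value of the Weil pairing $e_r\bigl(\alpha(e_1),\alpha(e_2)\bigr)$, a primitive $r$-th root of unity, so $Y(r)$ carries a natural map to $\mathrm{Spec}\,\mathbb{Z}[1/r,\zeta_r]$. I would prove that every fiber of this map is geometrically connected, which over $\mathbb{C}$ amounts to the connectedness of the Riemann surface $\Gamma(r)\backslash\mathbb{H}$, and then descend: since $\mathbb{Z}[1/r,\zeta_r]$ is integral, hence connected, and the smooth surjection $Y(r)\to\mathrm{Spec}\,\mathbb{Z}[1/r,\zeta_r]$ has geometrically connected fibers, $Y(r)$ is connected over $\mathbb{Z}[1/r]$. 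Propagating connectedness from the complex fiber to all residue characteristics prime to $r$ is precisely the substance of Igusa's irreducibility theorem, and it is this specialization step, rather than representability or smoothness, where the real work lies.
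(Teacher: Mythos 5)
The paper does not actually prove this statement: its ``proof'' is the citation ``See Corollary 8.40, [5] for (1) and Lemma 8.37, [5] for (2)'', so there is no in-paper argument to compare yours against. Your sketch follows the standard route taken in the literature being cited --- $j$-invariant classification over algebraically closed fields for (1); rigidity, representability, deformation theory, and Igusa irreducibility organized by the Weil pairing for (2) --- and the overall architecture is the right one.

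Two steps, however, are asserted rather than argued, and one of them is not merely incomplete but invalid as stated. For (1), the initiality condition of a coarse moduli space does not follow from ``bijectivity on geometric points together with the reducedness of $\mathbb{A}^1_{\mathbb{Z}}$'': a natural transformation $\phi:\mathcal{M}\to h_N$ only hands you morphisms to $N$ from schemes that carry elliptic curves, and you must still manufacture a morphism out of all of $\mathbb{A}^1_{\mathbb{Z}}$. The standard repair is to realize $\mathbb{A}^1_{\mathbb{Z}[1/r]}$ as the quotient of the fine moduli $Y(r)$ (for a suitable auxiliary level $r\geq 3$ invertible on the base) by $\mathrm{GL}_2(\mathbb{Z}/(r))$, apply $\phi$ to the universal family to get a $\mathrm{GL}_2(\mathbb{Z}/(r))$-invariant morphism $Y(r)\to N$, descend it by the categorical quotient property, and glue the results over $\mathbb{Z}[1/2]$ and $\mathbb{Z}[1/3]$; reducedness alone buys you nothing here. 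For (2), the rigidity lemma --- no nontrivial automorphism of $E$ over an algebraically closed field fixes $E[r]$ pointwise when $r\geq 3$ --- is the crux and deserves its one-line proof: if $\sigma\neq 1$ fixed $E[r]$, then $\sigma-1$ would be an isogeny of degree $\deg(\sigma-1)=2-\mathrm{Tr}(\sigma)\leq 4$ whose kernel contains $E[r]$, hence of degree at least $r^2\geq 9$, a contradiction. With these two points repaired, your outline matches the proof in the sources the paper is quoting, with the genuinely hard specialization step (Igusa's irreducibility theorem) correctly identified and correctly located.
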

\begin{proof}
 See Corollary 8.40, [5] for (1) and Lemma 8.37, [5] for (2).
\end{proof}
In this paper, we call $Y(r)$ \textit{the modular curve of level} $r$.
\subsection{Weil pairing}

Next, we recall the Weil pairing of an elliptic curve.
We make explicit the sign convention of the pairing, which is an important point in our result.

\begin{Thm}
\label{WP}
Let $N$ be a positive integer.
For any elliptic curve $E/S$,
there exists a canonical bilinear pairing
\[ e_N:E[N]\times E[N]\longrightarrow \mu_N , \]
which is alternating and induces a self-duality of $E[N]$.
The construction of this pairing is functorial in the sense that
it defines a morphism of functors
\begin{equation*}
 \begin{split}
  \mathcal{M}(N) &\longrightarrow \mu_N . \\
  [(E/S,(P,Q))]  &\rightsquigarrow e_N(P,Q)
 \end{split}
\end{equation*}
\end{Thm}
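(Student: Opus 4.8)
The plan is to build $e_N$ from the canonical principal polarization of $E$ together with Cartier duality, and then to deduce the two stated properties along with functoriality. First I would recall the standard fact that every elliptic curve $E/S$ carries a canonical isomorphism $\lambda : E \xrightarrow{\cong} E^{\vee}$ onto its dual abelian scheme, namely the principal polarization attached to the symmetric ample line bundle $\mathcal{O}_E(O)$; this is standard and stable under base change. Since $N$ is invertible on $S$, the finite locally free group scheme $E[N]$ is \'etale, and there is a canonical perfect pairing $E[N] \times E^{\vee}[N] \to \mu_N$ identifying $E^{\vee}[N]$ with the Cartier dual $\underline{\mathrm{Hom}}(E[N], \mu_N)$. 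I would then set
\[ e_N(P,Q) := \langle P, \lambda(Q)\rangle, \]
the image of $(P,\lambda(Q))$ under this Cartier pairing; the sign convention of Remarks \ref{RemNor} and \ref{TateE} is then fixed by the ordering of the two slots.

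With this definition, bilinearity is immediate, since the Cartier pairing is bilinear and $\lambda$ is a homomorphism. Self-duality is likewise essentially built in: $\lambda|_{E[N]} : E[N] \to E^{\vee}[N]$ is an isomorphism because $\lambda$ is, and the Cartier pairing is perfect, so the induced map $Q \mapsto e_N(-,Q)$ is an isomorphism $E[N] \xrightarrow{\cong} \underline{\mathrm{Hom}}(E[N], \mu_N)$. Both of these points are formal and require no hard input.

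The main obstacle is the alternating property $e_N(P,P) = 1$. Skew-symmetry $e_N(P,Q) = e_N(Q,P)^{-1}$ follows from the symmetry of $\lambda$ (that is, $\lambda^{\vee}=\lambda$ under $E \cong E^{\vee\vee}$), but the vanishing of the diagonal is strictly stronger over a general base and relies on the finer structure of the theta group of $\mathcal{O}_E(O)$ rather than on $\lambda$ alone. To establish it I would use that $\mu_N$ is finite \'etale and separated over $S$: for a fixed section $P$ of $E[N]$, the locus in $S$ on which the section $e_N(P,P) \in \mu_N(S)$ equals the identity is open and closed, so it suffices to verify $e_N(P,P)=1$ at each geometric point $\bar{s}$ of $S$. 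This reduces the claim to an elliptic curve over an algebraically closed field, where the alternating property is the classical statement, provable via the explicit divisor-and-function construction of the Weil pairing or via Mumford's theta-group computation. The care needed in justifying this \'etale-local descent cleanly is where I expect the real work to lie.

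Finally, for the morphism of functors $\mathcal{M}(N) \to \mu_N$ sending $[(E/S,(P,Q))]$ to $e_N(P,Q)$, I would check two things. First, the assignment is well defined on isomorphism classes, because $e_N$ is functorial in $E$: an isomorphism $\phi : E \xrightarrow{\cong} E'$ carries $\lambda$, the dual abelian scheme, and the Cartier pairing of $E$ to those of $E'$, so that $e_N(P,Q) = e_N(\phi P, \phi Q)$ computed on $E'$. Second, naturality in $S$, i.e.\ compatibility with any base change $S' \to S$, holds because the dual abelian scheme, the polarization $\mathcal{O}_E(O)$, and Cartier duality are all stable under base change. Together these yield the desired natural transformation.
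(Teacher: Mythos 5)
The paper does not actually prove this theorem: its ``proof'' is the single line ``See (2.8), [3]'', i.e.\ a citation to Katz--Mazur, so there is no in-paper argument to compare yours against. Your sketch is the standard construction that the cited reference carries out --- the $e_N$-pairing obtained from the autoduality $\lambda: E \xrightarrow{\cong} E^{\vee}$ of an elliptic curve combined with the Cartier duality between $\ker[N]$ and $\ker[N]^{\vee}$ --- and the formal points (bilinearity, perfectness, base-change compatibility, well-definedness on isomorphism classes, hence the morphism of functors $\mathcal{M}(N)\to\mu_N$) are handled correctly. You also correctly identify the alternating property as the only non-formal step. The one caveat is that your reduction of that step to geometric points uses that $\mu_N$ is \emph{\'etale} and separated over $S$ (so that the equalizer of $e_N(P,P)$ with the identity section is open and closed in $S$); this requires $N$ invertible on $S$, an hypothesis the theorem as stated does not impose on its first assertion (only the functoriality clause, living over $\mathbb{Z}[1/N]$, does). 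For arbitrary $N$ one must instead argue via the theta group of $\mathcal{O}_E(O)$ or by reduction to the universal curve over a reduced base. Since the paper only ever uses $N=3,4$ over bases where $N$ is invertible, this gap is harmless in context, but it should be flagged if the theorem is meant in full generality.
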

\begin{proof}
See (2.8), [3].
\end{proof}
\begin{Rem}
\label{RemNor}
{\rm There are two choices of the sign of $e_N$.
We choose $e_N$ so that it satisfies the following equality:
for any elliptic curve $E\cong \mathbb{C}/(\mathbb{Z}+\mathbb{Z}\tau) ~(\tau \in \mathcal{H})$ over
$\mathbb{C}$, we have
\[ e_N\left( \frac{1}{N},\frac{\tau}{N}\right) = \exp\left( \frac{2\pi i}{N} \right). \]
We call the pairing of such a choice the normalized Weil pairing.}
\end{Rem}

\subsection{The Tate curve}
Finally, we recall some properties of the Tate curves.
The main references on this topic are [6] and [8].

Let $(K,v)$ be a complete discrete valuation field.
We fix an element $q\in K^\times$ satisfying $v(q)>0$.
Consider the elliptic curve (called the Tate curve) over $K$ defined by the following Weierstrass equation:
\[ E_q:  y^2+xy = x^3+\left( -5\sum_{n\geq 1}\frac{n^3q^n}{1-q^n} \right) x+\left( -\frac{1}{12}\sum_{n\geq 1}\frac{(7n^5+5n^3)q^n}{1-q^n} \right). \]
The discriminant $\Delta$ of this Weierstrass equation is given by
\begin{equation}
\label{TD}
 \Delta = q\prod_{m\geq1}(1-q^m)^{24}.
\end{equation}
The curve $E_q$ has the following important properties:
\begin{Thm}
\label{FactTate}
Let the notation be as above.
Let $u$ be an indeterminate,
and $x=x(u,q)$ and $y=y(u,q)$ be the two power series in $\mathbb{Z}[u,u^{-1}][[q]]$ defined by
\begin{equation}
\begin{split}
 x(u,q) &:= \sum_{n\in \mathbb{Z}}\frac{q^nu}{(1-q^nu)^2}-2\sum_{n\geq 1}\frac{nq^n}{1-q^n}
\end{split}
\end{equation}
\begin{equation}
\begin{split}
 y(u,q) &:= \sum_{n\in \mathbb{Z}}\frac{q^{2n}u^2}{(1-q^nu)^3}+\sum_{n\geq 1}\frac{nq^n}{1-q^n}.
\end{split}
\end{equation}
(1) Then the map
\begin{equation*}
 \begin{split}
{K^{\times}}/q^{\mathbb{Z}} &\longrightarrow E_q(K) \\
  u ~~ &\longmapsto (x(u,q),y(u,q))
 \end{split}
\end{equation*}
is an isomorphism of abelian groups.

More generally, the map
\begin{equation*}
 \begin{split}
{\bar{K}^{\times}}/q^{\mathbb{Z}} &\longrightarrow E_q(\bar{K}) \\
  u ~~ &\longmapsto (x(u,q),y(u,q))
 \end{split}
\end{equation*}
makes sense and is an isomorphism of $G_K$-modules.\\
(2) For each positive integer $N$ prime to the characteristic of $K$, there exists a canonical exact sequence
\begin{equation}
\label{TateEx}
 0\rightarrow \mu_N(\bar{K})\rightarrow E_q[N](\bar{K})\rightarrow \mathbb{Z}/(N)\rightarrow 0
\end{equation}
of $G_K$-modules.
\end{Thm}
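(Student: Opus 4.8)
The plan is to construct the uniformization map $\bar K^{\times}/q^{\mathbb{Z}}\to E_q(\bar K)$ explicitly, to verify its algebraic properties by reducing every identity to the classical complex-analytic theory of elliptic functions, and to read off the exact sequence of (2) from the $N$-torsion of $\bar K^{\times}/q^{\mathbb{Z}}$. First I would settle convergence and well-definedness: since $v(q)>0$, for a fixed $u\in\bar K^{\times}\setminus q^{\mathbb{Z}}$ the general terms of the two series tend $v$-adically to $0$, so both converge in $\bar K$ (equipped with the unique extension of $v$), and one decrees that the class of $q^{\mathbb{Z}}$ maps to the origin $O$. Reindexing the sums by $n\mapsto n+1$ gives $x(qu,q)=x(u,q)$ and $y(qu,q)=y(u,q)$, so the assignment factors through $\bar K^{\times}/q^{\mathbb{Z}}$; and since the coefficients lie in $\mathbb{Z}[[q]]$ (hence are $G_K$-fixed), the map is $G_K$-equivariant and restricts to $K^{\times}/q^{\mathbb{Z}}\to E_q(K)$.

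The heart of (1) is three algebraic facts: that the pair $(x(u,q),y(u,q))$ satisfies the Weierstrass equation of $E_q$; that $u=1$ maps to $O$; and that the map is a homomorphism, i.e. it intertwines multiplication in $\bar K^{\times}$ with the chord-and-tangent law on $E_q$. Each is an identity among power series with coefficients in $\mathbb{Z}[u,u^{-1}][[q]]$, hence a universal identity that it suffices to check after the substitution $q=\exp(2\pi i\tau)$, $u=\exp(2\pi i z)$ over $\mathbb{C}$. There the series become, in Tate's normalization, the Weierstrass $\wp$-function and its derivative for the lattice $\mathbb{Z}+\mathbb{Z}\tau$, so the Weierstrass relation is the classical differential equation for $\wp$, and the homomorphism property is the statement that $z\mapsto\exp(2\pi i z)$ realizes $\mathbb{C}/(\mathbb{Z}+\mathbb{Z}\tau)\xrightarrow{\ \sim\ }\mathbb{C}^{\times}/q^{\mathbb{Z}}$ compatibly with the uniformization of the curve. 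Injectivity over $\bar K$ then follows by identifying the kernel, which the same complex picture pins down as exactly $q^{\mathbb{Z}}$.

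The step I expect to be the main obstacle is surjectivity over $\bar K$, the one assertion that is not a formal identity and genuinely uses completeness of $(K,v)$. Here I would argue by successive approximation: starting from a point $(x_0,y_0)\in E_q(\bar K)$ one solves for $u$ modulo successively higher powers of the maximal ideal, matching leading terms first and then refining by a Hensel-type argument (the relevant derivative being a unit, so that the approximations converge in the complete field). Granting this, (1) is complete.

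For (2), the isomorphism of (1) identifies $E_q[N](\bar K)$ with the $N$-torsion $\{u\in\bar K^{\times}:u^{N}\in q^{\mathbb{Z}}\}/q^{\mathbb{Z}}$ of $\bar K^{\times}/q^{\mathbb{Z}}$. As $N$ is prime to $\mathrm{char}(K)$ and $\bar K$ is algebraically closed, $\bar K$ contains a primitive $N$-th root of unity $\zeta_N$ and an $N$-th root $q^{1/N}$ of $q$. The subgroup $\mu_N(\bar K)=\langle\zeta_N\rangle$ lies in this torsion, giving the left map; and the map sending a class $[u]$ with $u^{N}=q^{a}$ to $a\bmod N$ is a well-defined surjection onto $\mathbb{Z}/(N)$ (with $q^{1/N}\mapsto 1$) whose kernel is precisely $\mu_N(\bar K)$, because $q$, having positive valuation, is not a root of unity. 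This yields the sequence (\ref{TateEx}); it is $G_K$-equivariant since $G_K$ stabilizes $\mu_N$ and, for $\sigma\in G_K$, $\sigma(q^{1/N})$ differs from $q^{1/N}$ only by a root of unity, leaving the exponent unchanged modulo $N$.
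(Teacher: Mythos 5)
This statement is quoted background: the paper proves nothing here, explicitly omitting all proofs in Section 2 and referring to Serre [6] and Silverman [8] for the Tate curve. So the only meaningful comparison is with the standard proofs in those references, and your outline does track them in most respects: convergence via $v(q)>0$ (though you should work inside the finite extension $L=K(u)$, which is complete, rather than in $\bar K$, which is not), invariance under $u\mapsto qu$ by reindexing, verification of the Weierstrass relation and of the addition law as identities of $q$-expansions with coefficients in $\mathbb{Z}[u,u^{-1}]$ checked over $\mathbb{C}$, triviality of the kernel (which really follows from convergence of the series off $q^{\mathbb{Z}}$ rather than from ``the complex picture''), and for (2) the computation of the $N$-torsion of $\bar K^{\times}/q^{\mathbb{Z}}$ as $\langle\zeta_N,q^{1/N}\rangle/q^{\mathbb{Z}}$ with the valuation-of-$u$ map onto $\mathbb{Z}/(N)$. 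Part (2) as you present it is complete and correct.

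The genuine gap is exactly where you predict it: surjectivity. A bare ``Hensel-type successive approximation'' is not yet an argument, because a Newton iteration needs an initial approximation whose existence is the whole point, and the required starting value depends on where the point $(x_0,y_0)$ reduces: to the identity (then $v(x_0)<0$ and one inverts the formal group law, where Hensel genuinely applies), to a nonsingular non-identity point of the nodal special fibre $y^2+xy=x^3$ (then one lifts the corresponding element of $\tilde E_{\mathrm{ns}}\cong\mathbb{G}_m$ over the residue field), or to the node itself (then one needs a representative $u$ with $0<v(u)<v(q)$ and must separately verify that the component group $E_q(L)/E_{q,0}(L)$ is exhausted, i.e.\ is cyclic of order $v_L(q)$). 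The standard treatments either carry out this case analysis along the filtration by the formal group and the reduction map, checking the map on each graded piece, or instead count the zeros of the rigid-analytic function $u\mapsto x(u,q)-x_0$ on the annulus via non-archimedean theta functions or Newton polygons. You should commit to one of these; as written, the claim that the approximations converge ``because the relevant derivative is a unit'' has no identified function, point, or derivative attached to it. One small bonus of running the surjectivity argument over an arbitrary complete $L$ rather than over $\bar K$: it yields the $K$-rational isomorphism $K^{\times}/q^{\mathbb{Z}}\cong E_q(K)$ directly, whereas deducing it from the $\bar K$-statement by ``restriction'' requires the extra observation that $H^1(G_K,q^{\mathbb{Z}})=0$, which your write-up silently skips.
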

\begin{Rem}
\label{TateE}
{\rm
Let us consider here the Tate curve $E_q$ over $K=\mathbb{Q}((q))$.
Then, the exact sequence (2.5) does not split as $G_K$-modules.
However, consider the base change of $E_q/K$ by the injection
\begin{equation*}
 \begin{split}
K &\longrightarrow L:=\mathbb{Q}(\zeta_N )((z)) \\
  q ~ &\longmapsto ~~ z^N.
 \end{split}
\end{equation*}
Then, we have the isomorphism of $G_L$-modules
\begin{equation*}
 \begin{split}
\mathbb{Z}/(N) &\xrightarrow{ ~~\cong~ } \mu_{N,L}(\bar{L}) \\
  1 ~~ &\longmapsto ~~ \zeta_N.
 \end{split}
\end{equation*}
We also obtain the canonical section
\begin{equation*}
 \begin{split}
\mathbb{Z}/(N) &\longrightarrow E_{q,L}[N](\bar{L}). \\
  1 ~~ &\longmapsto ~~z
 \end{split}
\end{equation*}
of the homomorphism $E_{q,L}[N](\bar{L})\rightarrow \mathbb{Z}/(N)$ in the exact sequence (2.5). 
Therefore, the exact sequence splits and gives a canonical identification
\begin{equation*}
 \begin{split}
 E_{q,L}[N] &\cong \mu_{N,L} \times \mathbb{Z}/(N)\\
        &\cong \mathbb{Z}/(N)\times \mathbb{Z}/(N).
 \end{split}
\end{equation*}
Under this identification, it is known that the normalized Weil pairing $e_N$ sends each
$((a,b),(c,d))\in \bigl( \mathbb{Z}/(N)\times \mathbb{Z}/(N)\bigr)^2$ to ${\zeta_N}^{ad-bc}$;
that is, $e_N(\zeta_N, z)=\zeta_N$.~(See [1, VII, 1, 16].)}
\end{Rem}

\section{Construction of torsors}
Our description of the 12-th roots of the discriminant of an elliptic curve by the torsion points consists of the following two parts:
one needs some geometric properties of elliptic curves, and the other only needs a bit knowledge of linear algebra.
In this section, we explain the linear-algebraic part.
Throughout this section,
we assume that $n=3,4$, and let $V$ denote a free $\mathbb{Z}/(n)$-module of rank $2$.
Our tasks here are to construct a set $T_n(V)$, which will later describe $\mu_n \sqrt[n]{\Delta_E}$ when $V=E[n]$,
and to define a simply transitive action of ${\bigwedge}^2V$ on $T_n(V)$.

\subsection{The construction of $T_3(V)$}
Here, $V$ is a $2$-dimensional vector space over $\mathbb{F}_3$.
We construct a set $T_3(V)$ attached to $V$.
We write $\mathbb{P}(V):= (V\setminus \{ 0 \})/\{ \pm 1 \}$ for the projective line associated with $V$, and $\overline{P}$ for the image of a point $P\in V\setminus \{ 0\}$ in $\mathbb{P}(V)$.
Note that $\# \mathbb{P}(V)=4$.
\begin{Def}
\label{tors3}
We define a set $T_3(V)$ by
\[ T_3(V) = \{\{X,Y\}| ~  X\sqcup Y=\mathbb{P}(V), \# X=\# Y=2 \}. \]
That is, this is the set of 2-2 partitions of $\mathbb{P}(V)$.
\end{Def}
For a basis $(P,Q)$ of $V$, we write $[P,Q]\in T_3(V)$ for $\{\{\overline{P},\overline{Q}\}, \{\overline{P+Q},\overline{P-Q}\}\}$.
Since $\mathbb{P}(V)=\{ \overline{P}, \overline{Q}, \overline{P+Q}, \overline{P-Q} \}$, the set $T_3(V)$ consists of 3 elements $[P,Q]$, $[P,P+Q]$, and $[P,P-Q]$.
\begin{Rem}
\label{simptr3}
{\rm
For the later use, note that $T_3(V)$ has canonically a simply transitive action of the alternating group $\mathcal{A}(\mathrm{Aut}(T_3(V)))$.}
\end{Rem}

\subsection{The construction of $T_4(V)$}
In this subsection, we define a set $T_4(V)$ for a  free $\mathbb{Z}/(4)$-module $V$ of rank $2$.
Our construction of $T_4(V)$ for $n=4$ is motivated by the definition of $w_4$ in Subsection 4.2.

Denote $V[2]=\ker (2\times :V\rightarrow V)$ and 
define a set $S_4(V)$  by
\[ S_4(V)=\bigl\{ (P,Q,R)\in V^{3}\, |\, \{2P,2Q,2R\}=V[2]\setminus 0 \bigr\}. \]
For each
$\sigma \in\mathfrak{S}_3,$
define an involution $[\sigma]$ of $S_4(V)$ by $[\sigma](P_1,P_2,P_3):=(P_1',P_2',P_3')$ with
\[ P_{\sigma(1)}'=P_{\sigma(1)}, P_{\sigma(2)}'=P_{\sigma(2)}+2P_{\sigma(1)}, \, \mathrm{and}\, P_{\sigma(3)}'=P_{\sigma(3)}.\]
Since $[\sigma]$'s commute with each other,
we obtain an action of $\mathbb{F}_2^{\mathfrak{S}_{\,3}}$ on $S_4(V)$.
Combining this action with a canonical action of $\mathfrak{S}_3$ on $S_4(V)$ by permutations, 
we obtain an action of $G:=\mathfrak{S}_3 \ltimes \mathbb{F}_2^{\mathfrak{S}_{\,3}}$ on  $S_4(V)$.
Here, the left action of $\mathfrak{S}_3$ on $\mathbb{F}_2^{\mathfrak{S}_{\,3}}$ defining the semidirect product is given by $\tau\cdot [\sigma] =[\tau\sigma]$ for $\sigma, \tau \in \mathfrak{S}_3$.
Let $N$ be the kernel of the composite
\[ r : \mathbb{F}_2^{\mathfrak{S}_{\,3}}\rightarrow \mathbb{F}_2^{\mathfrak{A}_{\,3}}\rightarrow \{\pm 1\}, \]
where the first map is the restriction to $\mathfrak{A}_3$ and the second map is the one sending $\sum a_\sigma [\sigma]$ to $(-1)^{\sum a_\sigma}$.
Since the action of $\mathfrak{S}_3$ on $\mathbb{F}_2^{\mathfrak{S}_{\,3}}$ defining the semidirect product induces an action of $\mathfrak{A}_3$ on $N$,
we obtain $H :=\mathfrak{A}_3\ltimes N\triangleleft G$.

\begin{Def}
\label{tors4}
Under the above setting,
define the set $T_4(V)$ by the quotient
\[T_4(V):= H\setminus S_4(V).\]
The equivalence class of $(P,Q,R)\in S_4(V)$ is denoted by $[P,Q,R]\in T_4(V)$.
\end{Def}
Note that $T_4(V)$ has an action of $G/H=\mathfrak{S}_3/\mathfrak{A}_3\times \{ \pm 1\}$.
\begin{Rem}
\label{mis4}
{\rm
(1)
The element $-1\in G/H$ acts on $T_4(V)$ by
\[ [P,Q,R]\mapsto -[P,Q,R]:=[-P,-Q,-R]. \]
Indeed, $-1=r(\sum_{\sigma\in \mathfrak{S}_3}[\sigma])$ and 
\begin{equation*}
\begin{split}
\sum_{\sigma\in \mathfrak{S}_3}[\sigma](P,Q,R)&=(P+2Q+2R,Q+2R+2P,R+2P+2Q)\\
                                       &=(-P,-Q,-R)
\end{split}
\end{equation*}
in $S_4(V)$.\\
(2)
Define a set $S_2(V[2])$ by
\[ S_2(V[2]) =\bigl\{ (A,B,C)\in V[2]^3 \, | \{ A,B,C \} = V[2]\setminus 0 \bigr\} \]
equipped with the canonical action by $\mathfrak{S}_3$.
Consider the quotient
\[ T_2(V[2]):= \mathfrak{A}_3\setminus S_2(V) \]
by the cyclic permutations;
that is, it is the set of the cyclic orders on $V[2]\setminus 0$.
The equivalence class of an element $(A,B,C)\in S_2(V[2])$ is denoted by $[A,B,C]\in T_2(V[2])$.\\
The map
\begin{equation*}
 \begin{split}
 \tilde{\pi} : S_4(V)~&\longrightarrow S_2(V[2])\\
  (P,Q,R)&\mapsto (2P,2Q,2R)
 \end{split}
\end{equation*}
is compatible with $\mathrm{pr}_1:G\rightarrow \mathfrak{S}_3$.
Hence, it induces a map
\[ \pi : T_4(V)\longrightarrow T_2(V[2]), \]
which is compatible with $\mathrm{pr}_1:G/H\rightarrow \mathfrak{S}_3/\mathfrak{A}_3$.
Thus $\mathfrak{S}_3/\mathfrak{A}_3\subset G/H$ switches the fibers of $\pi$ and $\{\pm 1\}=\ker (\mathrm{pr_1})\subset G/H$ preserves each fiber of $\pi$.}
\end{Rem}
\begin{Lem}
\label{SimpTr}
The action of $G$ on $S_4(V)$ is simply transitive.
\end{Lem}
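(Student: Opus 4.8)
The plan is to exploit the $G$-equivariant fibration $\tilde\pi : S_4(V)\to S_2(V[2])$, $(P,Q,R)\mapsto(2P,2Q,2R)$, from Remark~\ref{mis4}(2), which is compatible with $\mathrm{pr}_1:G\to\mathfrak{S}_3$, whose kernel is $\mathbb{F}_2^{\mathfrak{S}_3}$ and preserves every fiber of $\tilde\pi$. I would reduce the lemma to two independent statements by the elementary observation that, given a short exact sequence $1\to K\to G\to Q\to 1$ and a $G$-map $\pi:X\to Y$ compatible with $G\to Q$, if $Q$ acts simply transitively on $Y$ and $K$ acts simply transitively on each fiber of $\pi$, then $G$ acts simply transitively on $X$. (Transitivity: move $\pi(x)$ to $\pi(x')$ by a lift of a suitable element of $Q$, then match inside the fiber by $K$. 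Freeness: a stabilizer of $x$ projects to a stabilizer of $\pi(x)$ in $Q$, hence lies in $K$, and then fixes a point of a fiber.) It thus suffices to treat the base and a single fiber separately.

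The base step is immediate: $S_2(V[2])$ is the set of orderings of the three-element set $V[2]\setminus 0$, and the canonical $\mathfrak{S}_3$-action permuting these orderings is evidently simply transitive.

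The substance of the proof is the fiber step. Fix $(A,B,C)\in S_2(V[2])$, put $D_1=A,\,D_2=B,\,D_3=C$, and let $F=\tilde\pi^{-1}(A,B,C)=\{(P_1,P_2,P_3)\mid 2P_i=D_i\}$. Since $2V=V[2]$ and each fiber of $2\times:V\to V[2]$ is a $V[2]$-torsor, the set $F$ is a torsor under $V[2]^3$ acting coordinatewise. On $F$, the involution $[\sigma]$ adds the constant $2P_{\sigma(1)}=D_{\sigma(1)}$ to the $\sigma(2)$-th coordinate, i.e.\ acts as translation by $\iota_{\sigma(2)}(D_{\sigma(1)})$, where $\iota_j:V[2]\hookrightarrow V[2]^3$ is the $j$-th inclusion. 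Hence the $\mathbb{F}_2^{\mathfrak{S}_3}$-action on $F$ is by translations and factors through the $\mathbb{F}_2$-linear map $\phi:\mathbb{F}_2^{\mathfrak{S}_3}\to V[2]^3$ with $\phi([\sigma])=\iota_{\sigma(2)}(D_{\sigma(1)})$. This action is simply transitive precisely when $\phi$ is an isomorphism, and as both sides are $6$-dimensional over $\mathbb{F}_2$ it suffices to prove $\phi$ surjective.

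Here the only genuine input enters. As $\sigma$ runs over $\mathfrak{S}_3$, the pair $(\sigma(1),\sigma(2))$ runs over all ordered pairs of distinct indices, so $\mathrm{Im}\,\phi$ is spanned by the $\iota_j(D_i)$ with $i\neq j$. For fixed $j$, the two elements $D_i$ with $i\neq j$ are distinct nonzero vectors of $V[2]\cong\mathbb{F}_2^2$, hence a basis; therefore $\mathrm{Im}\,\phi\supseteq\iota_j(V[2])$ for every $j$, and $\mathrm{Im}\,\phi=V[2]^3$. This finishes the fiber step and hence the lemma. I expect this last computation---specifically the fact that any two of the three nonzero $2$-torsion points span $V[2]$---to be the crux, the fibration formalism and the base case being routine.
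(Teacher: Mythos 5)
Your proof is correct and follows essentially the same route as the paper's: reduce along the fibration $\tilde\pi:S_4(V)\to S_2(V[2])$ to simple transitivity of $\mathfrak{S}_3$ on the base and of $\mathbb{F}_2^{\mathfrak{S}_3}$ on each fiber, then identify the fiber action with translations of $V[2]^3$. The only difference is that you actually verify the bijectivity of the map $\mathbb{F}_2^{\mathfrak{S}_3}\to V[2]^3$ (via the observation that any two of the three nonzero points of $V[2]$ form a basis), whereas the paper asserts this identification without proof.
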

\begin{proof}
We use the notation in Remark \ref{mis4}.
Since the action of $\mathfrak{S}_3$ on $S_2(V)$ is simply transitive and $\tilde{\pi}$ is compatible with $\mathrm{pr}_1: G\rightarrow \mathfrak{S}_3$, it is enough to prove that $\mathbb{F}_2^{\mathfrak{S}_{\,3}}=\ker (\mathrm{pr}_1)$ acts on each fiber of $\tilde{\pi}$ simply transitively.

To prove this, let us fix an element $(A_1,A_2,A_3)\in S_4(V)$ and consider its fiber $F$ of $\tilde{\pi}$.
We make an identification $f: \mathbb{F}_2^{\mathfrak{S}_{\,3}}\overset{\simeq}{\longrightarrow} V[2]^3$ by sending $[\sigma]$ to $(A'_1,A'_2,A'_3)$ with $A'_{\sigma(2)} = A_{\sigma(1)}$ and $A'_{\sigma(1)}=A'_{\sigma(3)}=0$.
If we define an action of $V[2]^3$ on $F$ by the componentwise addition,
then the action $\mathbb{F}_2^{\mathfrak{S}_{\,3}}$ on $F$ is identified via $f$ with the action $V[2]^3$ on it,
which obviously acts simply and transitively on $F$.
\end{proof}
\begin{Cor}
\label{ord}
We have $\# T_4(V)=4.$
\end{Cor}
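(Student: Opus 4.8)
The plan is to compute $\# T_4(V)$ directly from its definition as the quotient $H \setminus S_4(V)$, using the simply transitive action just established in Lemma~\ref{SimpTr}. Since $G$ acts simply transitively on $S_4(V)$, we immediately have $\# S_4(V) = \# G$, so the cardinality of $S_4(V)$ is determined purely group-theoretically; no further inspection of the explicit triples $(P,Q,R)$ is needed. Thus my first step is to record $\# G = \# \mathfrak{S}_3 \cdot \# \mathbb{F}_2^{\mathfrak{S}_3} = 6 \cdot 2^6 = 384$.

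Next I would count $\# H$. Here $H = \mathfrak{A}_3 \ltimes N$, so $\# H = \# \mathfrak{A}_3 \cdot \# N = 3 \cdot \# N$. To find $\# N$, recall that $N = \ker r$, where $r$ is the composite $\mathbb{F}_2^{\mathfrak{S}_3} \to \mathbb{F}_2^{\mathfrak{A}_3} \to \{\pm 1\}$. The first map is restriction to $\mathfrak{A}_3$, which is surjective, and the second sends $\sum a_\sigma [\sigma]$ to $(-1)^{\sum a_\sigma}$, a nonzero linear functional and hence also surjective. Therefore $r$ is a surjection onto $\{\pm 1\}$, giving $\# N = \tfrac{1}{2}\# \mathbb{F}_2^{\mathfrak{S}_3} = \tfrac{1}{2}\cdot 64 = 32$, and so $\# H = 3 \cdot 32 = 96$.

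Finally, since $G$ acts simply transitively on $S_4(V)$ and $H \leq G$, the subgroup $H$ also acts freely (being a subgroup of a group acting freely), so every $H$-orbit has exactly $\# H$ elements. Hence
\[
\# T_4(V) = \# (H \setminus S_4(V)) = \frac{\# S_4(V)}{\# H} = \frac{\# G}{\# H} = \frac{384}{96} = 4.
\]
Equivalently, one can simply observe $\# T_4(V) = \#(G/H) = [G:H]$, and since $G/H \cong \mathfrak{S}_3/\mathfrak{A}_3 \times \{\pm 1\}$ has order $2 \cdot 2 = 4$, the result follows at once.

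The argument is essentially a bookkeeping computation of group orders, so I do not expect any genuine obstacle. The only point requiring a moment of care is verifying that $r$ is surjective so that $[\mathbb{F}_2^{\mathfrak{S}_3} : N] = 2$; this is where one must confirm both maps in the composite defining $r$ hit $\{\pm 1\}$ nontrivially. Everything else reduces to the freeness of the $H$-action, which is inherited from the simply transitive $G$-action of Lemma~\ref{SimpTr}.
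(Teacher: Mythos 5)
Your proof is correct and follows essentially the same route as the paper: both deduce from the simply transitive $G$-action of Lemma~\ref{SimpTr} that the $H$-orbits partition $S_4(V)$ into $[G:H]$ classes, and the paper simply cites $\#(G/H)=4$ directly (your closing remark) rather than tallying $\#G=384$ and $\#H=96$ explicitly. Your order computations are all correct, so this is just a more verbose version of the paper's argument.
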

\begin{proof}
By Lemma \ref{SimpTr}, the action of $G/H$ on $T_4(V)$ is simply transitive. Our assertion follows from this and $\# G/H=4$.
\end{proof}
\begin{Rem}
\label{act}
{\rm
Let $[P,Q,R]\in T_4(V)$.
Since $G/H=\mathfrak{A}_3\times \{\pm1\}$ is generated by $((12), 1)$ and $(\mathit{id}, -1)$,
we see that
\begin{equation*}
 \begin{split}
  T_4(V) &= (G/H) \cdot [P,Q,R]\\
       &=  \{ \pm[P,Q,R], \pm[Q,P,R]\}.
 \end{split}
\end{equation*}}
\end{Rem}

We use the following lemma and remark in the next subsection to define an action of $\bigwedge^2 V$ on $T_4(V)$.
\begin{Lem}
\label{group}
Let $X$ be a set consisting of 4 elements.
Denote by $S_1$ the set of cyclic subgroup of order 4 in $\mathrm{Aut}(X)$, and by $S_2$ the set of elements in $\mathrm{Aut}(X)$ of order 2 with no fixed points on $X$.

(1)
For any $C\in S_1$, the action of $C$ on $X$ is simply transitive.
In particular, the order 2 element $\tau$ in $C$ belongs to $S_2$.

(2)
Mapping each $C\in S_1$ to its element of order 2 defines a bijection $d: S_1\overset{\simeq}{\longrightarrow}S_2$.
For any $\tau\in S_2$, we denote $d^{-1}(\tau)$ by $C(\tau)$.
\end{Lem}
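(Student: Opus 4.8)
The plan is to analyze the combinatorial structure of $\mathrm{Aut}(X)\cong\mathfrak{S}_4$ directly, since $\# X=4$. First I would recall the cycle-type classification of elements of $\mathfrak{S}_4$: the elements of order $4$ are the $4$-cycles (there are $6$ of them), and the fixed-point-free elements of order $2$ are exactly the products of two disjoint transpositions, i.e.\ the double transpositions (there are $3$ of them). Thus $\# S_2=3$. For part (1), a cyclic group $C$ of order $4$ is generated by a $4$-cycle $\tau_0$; since a $4$-cycle permutes all four points in a single orbit, $C=\langle\tau_0\rangle$ acts transitively on $X$, and as $\# C=\# X=4$ this action is simply transitive. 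The unique order-$2$ element of $C$ is $\tau_0^2$, which for a $4$-cycle $(abcd)$ equals $(ac)(bd)$, a fixed-point-free involution; hence $\tau_0^2\in S_2$, proving (1).

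For part (2), I would first check that $d$ is well defined: each cyclic group $C\in S_1$ has a \emph{unique} subgroup of order $2$ (cyclic groups have a unique subgroup of each order dividing the group order), so $d(C)$ is unambiguous, and by (1) it lands in $S_2$. To establish bijectivity I would exhibit the inverse explicitly. Given $\tau=(ac)(bd)\in S_2$, the $4$-cycles whose square is $\tau$ are exactly $(abcd)$ and $(adcb)=(abcd)^{-1}$; these two $4$-cycles generate the \emph{same} cyclic group $C(\tau):=\langle(abcd)\rangle$. So each $\tau$ has a unique preimage cyclic group, giving a two-sided inverse to $d$.

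A clean way to package this counting is to note that $S_1$ has $6$ elements (the $6$ four-cycles split into $3$ inverse-pairs, each pair generating one of the $3$ cyclic subgroups of order $4$, so $\# S_1=3$) and $S_2$ has $3$ elements, and the map $d$ I have described is surjective with singleton fibers, hence bijective; alternatively, injectivity plus the equality $\# S_1=\# S_2=3$ suffices. I would verify injectivity by observing that if $d(C)=d(C')=\tau$, then both $C$ and $C'$ are order-$4$ cyclic subgroups containing $\tau$, and each is generated by a $4$-cycle squaring to $\tau$; since the only such $4$-cycles are $(abcd)$ and its inverse, both of which lie in $\langle(abcd)\rangle$, we get $C=C'$.

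I do not anticipate a serious obstacle here, as the statement is purely about the finite group $\mathfrak{S}_4$; the only point requiring care is the well-definedness of $d$ (uniqueness of the order-$2$ subgroup in a cyclic $4$-group) and the identification of the two square roots of a double transposition as mutually inverse $4$-cycles generating one cyclic group. The mild subtlety to keep explicit is that taking the element of order $2$ in $C$ and taking the subgroup of order $2$ in $C$ are interchangeable data, so that the map to $S_2$ (elements) rather than to subgroups is harmless.
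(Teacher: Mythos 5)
Your proof is correct, but the route to bijectivity in part (2) differs from the paper's. For (1) the two arguments are essentially the same (the paper runs the orbit decomposition abstractly --- if $C$ were not transitive every orbit would have size $\leq 2$, forcing every element of $C$ to have order $\leq 2$ --- while you invoke the cycle-type fact that the generator is a $4$-cycle; both are fine). For (2), the paper observes that $\mathrm{Aut}(X)$ acts transitively by conjugation on both $S_1$ and $S_2$, that $d$ is equivariant for these actions (hence surjective), and that $\# S_1=\# S_2=3$, so $d$ is a bijection; you instead compute the fibers of $d$ explicitly, identifying the two $4$-cycles squaring to a given double transposition $(ac)(bd)$ as $(abcd)$ and its inverse, which generate the same cyclic subgroup, thereby exhibiting a two-sided inverse. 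The paper's symmetry argument is slicker and avoids any computation with cycles (an equivariant map between transitive $G$-sets of equal cardinality is automatically bijective), whereas your argument is more self-contained and has the minor advantage of describing $C(\tau)$ concretely, which could be useful when the lemma is applied. Both proofs are complete; your attention to the well-definedness of $d$ (uniqueness of the order-$2$ element in a cyclic group of order $4$) matches the paper's remark that (1) is what makes $d$ definable.
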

\begin{proof}
(1)
Considering the orbit decomposition of $X$ by $C$, we see that the action of $C$ on $X$ is transitive; if not, any element of $C$ must be of order $1$ or $2$, which is a contradiction. This shows (1) since $\# C= \# X (=4)$.

(2)
By (1), we can define the map $d$.
Note that $\mathrm{Aut}(X)$ acts transitively on both $S_1$ and $S_2$ by conjugation, and that $d$ commutes with these actions.
Also, $\#S_1=\#S_2=3$, since there are 6 elements of order 4 in $\mathrm{Aut}(X)$ and each $C\in S_1$ has two generators, and since $S_2$ is identified with the set of 2-2 partitions via the orbit decompositions.
These shows that $d$ is bijective.
\end{proof}

\subsection{The action of $\bigwedge^2 V$ on $T_n(V)$}

For each basis $(P,Q)$ of $V$, we denote by $\varphi_{P,Q}\in \mathrm{SL}(V)$ the map determined by $\varphi_{P,Q}(P)=P$ and $\varphi_{P,Q}(Q)=P+Q$.

\begin{Lem}
\label{hom}
There exists a unique surjective homomorphism $\varphi : {\bigwedge}^2V\twoheadrightarrow \mathrm{SL}(V)^{\mathrm{ab}}$ such that,
for any basis $(P,Q)$ of $V$, $\varphi$ maps $P\wedge Q$  to $\overline{\varphi_{P,Q}}\in \mathrm{SL}(V)^\mathrm{ab}$.
\end{Lem}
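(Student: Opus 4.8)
The plan is to build $\varphi$ by fixing one reference basis, declaring where the corresponding generator of $\bigwedge^2 V$ goes, and then checking compatibility for every other basis through the way transvections transform under conjugation. First I would record the two linear-algebra facts that drive everything. Since $V$ is free of rank $2$, the module $\bigwedge^2 V$ is free of rank $1$ over $\mathbb{Z}/(n)$, and $P\wedge Q$ is one of its generators for every basis $(P,Q)$; moreover, if $g\in\mathrm{GL}(V)$ sends a reference basis $(P_0,Q_0)$ to $(P,Q)$ (that is, $gP_0=P$ and $gQ_0=Q$), then $P\wedge Q=\det(g)\,(P_0\wedge Q_0)$. Secondly, the transvections transform by conjugation: $\varphi_{gP,gQ}=g\,\varphi_{P,Q}\,g^{-1}$ for every $g\in\mathrm{GL}(V)$, as one checks by evaluating both sides on $gP$ and $gQ$. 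Because $\varphi_{P_0,Q_0}^{\,n}=1$ in $\mathrm{SL}(V)$, its class $\overline{\varphi_{P_0,Q_0}}$ has order dividing $n$, so there is a well-defined homomorphism $\varphi\colon\bigwedge^2 V\to\mathrm{SL}(V)^{\mathrm{ab}}$ out of the cyclic group $\bigwedge^2 V$ determined by $\varphi(P_0\wedge Q_0)=\overline{\varphi_{P_0,Q_0}}$. Uniqueness is then immediate, since $P_0\wedge Q_0$ generates $\bigwedge^2 V$.

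The heart of the argument is to understand the conjugation action of $\mathrm{GL}(V)$ on $\mathrm{SL}(V)^{\mathrm{ab}}$. Conjugation by an element of $\mathrm{SL}(V)$ is inner, hence trivial on the abelianization, so this action factors through $\det\colon\mathrm{GL}(V)\twoheadrightarrow(\mathbb{Z}/(n))^{\times}$. I would pin it down on the representatives $g_u=\mathrm{diag}(u,1)$ in the basis $(P_0,Q_0)$: a direct computation gives $g_u\,\varphi_{P_0,Q_0}\,g_u^{-1}=\varphi_{P_0,Q_0}^{\,u}$, so conjugation by $g_u$ multiplies $\overline{\varphi_{P_0,Q_0}}$ by $u$. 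Since $\mathbb{Z}/(n)$ is a local ring for $n=3,4$, the group $\mathrm{SL}(V)$ is generated by transvections, and the class of every transvection is a multiple of $\overline{\varphi_{P_0,Q_0}}$ in $\mathrm{SL}(V)^{\mathrm{ab}}$; hence $\overline{\varphi_{P_0,Q_0}}$ generates $\mathrm{SL}(V)^{\mathrm{ab}}$. Consequently the automorphism induced by $g_u$ is multiplication by $u$ on all of $\mathrm{SL}(V)^{\mathrm{ab}}$, and by the factorization through $\det$ the same holds for every $g$ with $\det(g)=u$. This simultaneously yields surjectivity of $\varphi$, since its image already contains the generator $\overline{\varphi_{P_0,Q_0}}$.

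Finally I would assemble the compatibility for an arbitrary basis $(P,Q)$. Writing $(P,Q)=(gP_0,gQ_0)$ with $d=\det(g)$, the first paragraph gives $\varphi(P\wedge Q)=d\cdot\overline{\varphi_{P_0,Q_0}}$, while the transvection identity together with the conjugation computation gives $\overline{\varphi_{P,Q}}=\overline{g\,\varphi_{P_0,Q_0}\,g^{-1}}=d\cdot\overline{\varphi_{P_0,Q_0}}$; the two agree, so $\varphi$ has the required property on every basis. I expect the single genuinely substantive point to be the computation of the previous paragraph, namely that $\mathrm{GL}(V)$ acts on $\mathrm{SL}(V)^{\mathrm{ab}}$ through $\det$ followed by scalar multiplication; everything else is bookkeeping about generators of a rank-one module. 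A secondary point to state carefully is the generation of $\mathrm{SL}_2(\mathbb{Z}/(n))$ by transvections, as this is exactly what guarantees both surjectivity and that $\overline{\varphi_{P_0,Q_0}}$ is genuinely a generator of $\mathrm{SL}(V)^{\mathrm{ab}}$.
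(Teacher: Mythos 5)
Your proposal is correct and follows essentially the same route as the paper: define $\varphi$ on a single generator $P_0\wedge Q_0$, establish the key identity $\overline{\varphi_{gP_0,gQ_0}}=\det(g)\cdot\overline{\varphi_{P_0,Q_0}}$ in $\mathrm{SL}(V)^{\mathrm{ab}}$ by combining $\varphi_{gP,gQ}=g\varphi_{P,Q}g^{-1}$ with an explicit diagonal conjugation and the triviality of inner automorphisms on the abelianization, and deduce both basis-independence and surjectivity from generation of $\mathrm{SL}(V)$ by the transvections $\varphi_{P,Q}$. The only differences are cosmetic (you conjugate by $\mathrm{diag}(u,1)$ where the paper uses $\mathrm{diag}(1,\det u)$, and you make explicit the order condition $\varphi_{P_0,Q_0}^{\,n}=1$ needed to define the map on the cyclic group, which the paper leaves implicit).
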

\begin{proof}
For a basis $(P,Q)$ of $V$, mapping a generator $P\wedge Q$ of $\bigwedge ^2V$ to $\overline{\varphi_{P,Q}}\in \mathrm{SL}(V)^\mathrm{ab}$ defines a homomorphism $\varphi : {\bigwedge}^2V\rightarrow \mathrm{SL}(V)^{\mathrm{ab}}$.
To see that $\varphi$ is independent of the choice of $(P,Q)$,
we claim that, for $u\in \mathrm{GL}(V)$,
we have $\varphi_{P,Q}^{\det u}\equiv \varphi_{u(P),u(Q)}$ in $\mathrm{SL}(V)^\mathrm{ab}$.
Let $v$ be an element of $\mathrm{GL}(V)$ given by $v(P)=P$ and $v(Q)=(\det u)Q$.
Then, we have $\varphi_{P,Q}^{\det u}=\varphi_{P,(\det u)Q}=v\varphi_{P,Q}v^{-1}$.
Also,  $\varphi_{u(P),u(Q)}=u\varphi_{P,Q}u^{-1}$.
Since $u^{-1}v\in \mathrm{SL}(V)$, the claim follows.

By the above claim, we see that $\varphi$ is surjective as follows.
Because $\mathrm{SL}(V)$ is generated by all the elements $\varphi_{u(P),u(Q)}=u\varphi_{P,Q}u^{-1}$ with $u\in \mathrm{GL}(V)$,
the claim shows that $\mathrm{SL}(V)^\mathrm{ab}$ is generated by $\overline{\varphi_{P,Q}}^{\pm 1}$, and thus by $\varphi(P\wedge Q)=\overline{\varphi_{P,Q}}$.
\end{proof}

We let $\mathrm{SL}(V)$ canonically act on $T_n(V)$ and consider the corresponding homomorphism 
 $\tilde{\psi}: \mathrm{SL}(V)\rightarrow \mathrm{Aut}(T_n(V))$.
\begin{Prop}
\label{SLab}
(1)
Let $C$ denote the subgroup $\mathfrak{A}(\mathrm{Aut}(T_3(V)))$ (resp. $C(-1)$ as in Lemma \ref{group} (2)) of $\mathrm{Aut}(T_n(V))$ for $n=3$ (resp. $n=4$).
Then, the map $\tilde{\psi}$ induces an isomorphism
\[ \psi: \mathrm{SL}(V)^{\mathrm{ab}} \overset{\simeq}{\longrightarrow}C. \]
(2)
The surjective map $\varphi: \bigwedge^2V\twoheadrightarrow \mathrm{SL}(V)^\mathrm{ab}$ in Lemma \ref{hom} is an isomorphism.
\end{Prop}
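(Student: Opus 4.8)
The plan is to prove parts (1) and (2) for each case $n=3$ and $n=4$ separately, since the groups involved differ. For both parts the guiding idea is the same: the abelianization $\mathrm{SL}(V)^{\mathrm{ab}}$ is a cyclic group whose order I can compute directly ($\mathrm{SL}_2(\mathbb{F}_3)^{\mathrm{ab}}\cong\mathbb{Z}/3$ and $\mathrm{SL}_2(\mathbb{Z}/4)^{\mathrm{ab}}\cong\mathbb{Z}/4$), and since $\bigwedge^2 V$ is cyclic of order $n$ (free of rank $1$ over $\mathbb{Z}/(n)$), proving surjectivity of a map between two finite cyclic groups of the same order forces it to be an isomorphism. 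So for part (2), once I know $\#\mathrm{SL}(V)^{\mathrm{ab}}=n$ and that $\varphi$ is surjective (already in Lemma~\ref{hom}), the isomorphism is automatic by counting. The whole proposition therefore reduces to identifying $C$ inside $\mathrm{Aut}(T_n(V))$ and pinning down the order of $\mathrm{SL}(V)^{\mathrm{ab}}$.

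\emph{For part (1), $n=3$.} Here $\#T_3(V)=3$, so $\mathrm{Aut}(T_3(V))\cong\mathfrak{S}_3$ and $C=\mathfrak{A}_3\cong\mathbb{Z}/3$. First I would check that the image of $\tilde\psi$ lands in $C$: since $\mathrm{SL}(V)$ is generated by transvections $\varphi_{P,Q}$, and I can compute directly how $\varphi_{P,Q}$ permutes the three elements $[P,Q]$, $[P,P+Q]$, $[P,P-Q]$ of $T_3(V)$, I expect each such transvection to act as a $3$-cycle, hence an element of $\mathfrak{A}_3$. Because $\mathfrak{A}_3$ is abelian, $\tilde\psi$ factors through $\mathrm{SL}(V)^{\mathrm{ab}}$, giving $\psi:\mathrm{SL}(V)^{\mathrm{ab}}\to C$. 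To see $\psi$ is an isomorphism it then suffices to verify it is surjective (the transvection acts as a nontrivial $3$-cycle, so the image is all of $\mathfrak{A}_3$) together with the order computation $\#\mathrm{SL}_2(\mathbb{F}_3)^{\mathrm{ab}}=3$.

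\emph{For part (1), $n=4$.} Here $\#T_4(V)=4$ by Corollary~\ref{ord}, and $C=C(-1)$ is the cyclic subgroup of order $4$ in $\mathrm{Aut}(T_4(V))$ whose order-$2$ element is the action of $-1\in\mathrm{SL}(V)$ (which by Remark~\ref{mis4}(1) is $[P,Q,R]\mapsto[-P,-Q,-R]$, an element of $S_2$). I would compute the action of a generating transvection $\varphi_{P,Q}$ on $T_4(V)=\{\pm[P,Q,R],\pm[Q,P,R]\}$ using the explicit description in Remark~\ref{act}, and verify it generates a cyclic group of order $4$ whose square is the $-1$-action — so $\tilde\psi(\varphi_{P,Q})$ generates exactly $C(-1)$. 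Since $C(-1)$ is abelian, $\tilde\psi$ again factors through $\mathrm{SL}(V)^{\mathrm{ab}}$; surjectivity onto $C$ follows from the generation, and combining with $\#\mathrm{SL}_2(\mathbb{Z}/4)^{\mathrm{ab}}=4$ gives the isomorphism. Part (2) then follows exactly as above by counting.

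\emph{The main obstacle} I expect is the explicit bookkeeping of how the transvection $\varphi_{P,Q}$ acts on $T_4(V)$ — unwinding the quotient $H\setminus S_4(V)$ and tracking the representative $(P,Q,R)$ through $\varphi_{P,Q}$ to confirm the induced permutation is a $4$-cycle (rather than some other element) and that its square is precisely the $-1$-action. The $n=3$ case is a short finite check, but the $n=4$ case requires care in matching the abstract group $G/H=\mathfrak{A}_3\times\{\pm1\}$ acting on $T_4(V)$ with the concrete permutation action, so that the generator of $C(-1)$ is correctly identified as the image of a transvection.
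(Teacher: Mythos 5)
Your proposal is correct and follows essentially the same route as the paper: show that each transvection $\varphi_{P,Q}$ acts on $T_n(V)$ as a generator of $C$ (a $3$-cycle on the three elements $[P,iP+Q]$ for $n=3$; for $n=4$ an order-$4$ permutation whose square is the $-1$-action, hence generating exactly $C(-1)$ by Lemma \ref{group}), so that $\tilde\psi$ factors through $\mathrm{SL}(V)^{\mathrm{ab}}$ and surjects onto $C$, and then conclude by a counting argument. The one difference is at the final count: you import $\#\mathrm{SL}_2(\mathbb{Z}/(n))^{\mathrm{ab}}=n$ as an external fact (true, but unproved in your sketch and less standard for $\mathbb{Z}/(4)$ than for $\mathbb{F}_3$), whereas the paper avoids needing it by sandwiching $\mathrm{SL}(V)^{\mathrm{ab}}$ between the two surjections $\bigwedge^2V\twoheadrightarrow \mathrm{SL}(V)^{\mathrm{ab}}\twoheadrightarrow C$ of Lemma \ref{hom} and part (1), with $\#\bigwedge^2V=\#C=n$; this proves (1) and (2) simultaneously and yields the order of the abelianization as a corollary rather than a prerequisite.
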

\begin{proof}
(1) (2)
We first prove that the map $\tilde{\psi}$ induces a \textit{surjective} homomorphism
\[ \psi: \mathrm{SL}(V)^{\mathrm{ab}}\twoheadrightarrow C.\]
Since $\mathrm{SL}(V)$ is generated by the elements $\varphi_{X,Y}$ for all bases $(X,Y)$ of $V$,
it suffices to show that, for 
a basis $(P,Q)$ of $V$, $\tilde{\psi}$ maps $\varphi_{P,Q}$ to a generator of $C$.

For $n=3$, this follows because $T_3(V)=\{[P,Q],[P,P+Q],[P,2P+Q]\}$ and $\varphi_{P,Q}([P,iP+Q])= [P,(i+1)P+Q]$.

We next consider the case $n=4$.
Since $\varphi_{P,Q}^2[P,Q,P+Q]=[P,Q+2P,(P+Q)+2P]=([\mathrm{id}]+[23])[P,Q,P+Q]=-[P,Q,P+Q]$,
we see that $\tilde{\psi}(\varphi_{P,Q})$ is of order 4. Further, by Lemma \ref{group}, $\tilde{\psi}(\varphi_{P,Q}^2)$ is of order 2 with no fixed points,
which implies that $\varphi_{P,Q}^2[Q,P,P+Q]$ must be $-[Q,P,P+Q]$. Therefore, $\tilde{\psi}(\varphi_{P,Q}^2)=-1$ and $C(-1)$ is generated by $\varphi_{P,Q}$.

The surjective maps $\psi$ and $\varphi$ are actually bijective because $\# \bigwedge^2V=\# C (=4)$.
\end{proof}

We now define an action of $\bigwedge^2 V$ on $T_n(V)$ by the composition
\[ \psi\circ \varphi: \bigwedge^2 V\overset{\simeq}{\longrightarrow} \mathrm{SL}(V)^\mathrm{ab} \overset{\simeq}{\longrightarrow} C\subset \mathrm{Aut}(T_n(V)). \]

\begin{Cor}
\label{SimpTr}
The action of $\bigwedge^2 V$ on $T_n(V)$ is simply transitive.
\end{Cor}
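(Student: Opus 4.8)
Prove Corollary \ref{SimpTr}: the action of $\bigwedge^2 V$ on $T_n(V)$ via $\psi \circ \varphi$ is simply transitive.

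The plan is to exploit the fact that the action of $\bigwedge^2 V$ on $T_n(V)$ was \emph{defined} as the composition
\[ \psi\circ\varphi: \bigwedge^2 V\overset{\simeq}{\longrightarrow}\mathrm{SL}(V)^{\mathrm{ab}}\overset{\simeq}{\longrightarrow}C\subset\mathrm{Aut}(T_n(V)), \]
in which both arrows are isomorphisms by Proposition \ref{SLab}. Consequently the action homomorphism $\bigwedge^2 V\to\mathrm{Aut}(T_n(V))$ is injective with image exactly the subgroup $C$. The whole question therefore reduces to verifying that $C$ itself acts simply transitively on $T_n(V)$, after which the result transports back along the isomorphism $\psi\circ\varphi$.

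To make this reduction rigorous I would first record the elementary principle that simple transitivity is transported along a group isomorphism: if a group acts on a set through an isomorphism onto a subgroup $C\subseteq\mathrm{Aut}(X)$, then the given action is simply transitive if and only if $C$ acts simply transitively. Transitivity is immediate, since the orbit of a point under $\bigwedge^2 V$ coincides with its orbit under $C$; freeness transfers because the bijection $\bigwedge^2 V\overset{\simeq}{\to}C$ matches each element fixing a point with a unique preimage. Thus no separate computation for $\bigwedge^2 V$ is needed beyond the structure of $C$ already established.

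It then remains to treat the two cases separately. For $n=3$ we have $C=\mathfrak{A}(\mathrm{Aut}(T_3(V)))$ and $\#T_3(V)=3$, so $C$ is the alternating group $\mathfrak{A}_3$ acting on a three-element set; this action is simply transitive, which is precisely the content of Remark \ref{simptr3}. For $n=4$, Corollary \ref{ord} gives $\#T_4(V)=4$, while by construction $C=C(-1)$ is a cyclic subgroup of order $4$ in $\mathrm{Aut}(T_4(V))$; Lemma \ref{group}(1) states exactly that any such cyclic subgroup acts simply transitively on the four-element set $T_4(V)$. In both cases $C$ acts simply transitively, and the principle above finishes the argument.

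I do not anticipate a genuine obstacle: the corollary is a formal consequence of Proposition \ref{SLab}, which already identifies $\bigwedge^2 V$ with $C$, combined with the two group-theoretic facts (Remark \ref{simptr3} and Lemma \ref{group}(1)) describing how $C$ sits inside the relevant automorphism group. The only point demanding care is to state cleanly that simple transitivity passes through the isomorphism $\psi\circ\varphi$, so that the verification can legitimately be carried out on the side of $C$ alone.
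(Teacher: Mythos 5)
Your proof is correct and follows exactly the paper's route: the paper's own proof is the one-line citation of Remark \ref{simptr3}, Lemma \ref{group}~(1), and Proposition \ref{SLab}, which is precisely the reduction you carry out (transport simple transitivity along the isomorphism $\psi\circ\varphi$ onto $C$, then handle $n=3$ via the alternating group on the three-element set $T_3(V)$ and $n=4$ via the cyclic group $C(-1)$ of order $4$ acting on the four-element set $T_4(V)$). Your write-up just makes the implicit transport step explicit.
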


\begin{proof}
This follows from Remark \ref{simptr3}, Lemma \ref{group} (1), and Proposition \ref{SLab}.
\end{proof}

Next, we see that the action $\bigwedge^2V\curvearrowright T_n(V)$ has a simple description when $V$ is moreover accompanied with an extension
\begin{equation}
\label{ext}
0\rightarrow L \rightarrow V \overset{p}{\longrightarrow} \mathbb{Z}/(n)\rightarrow 0
\end{equation}
of $\mathbb{Z}/(n)$.
We observe the following:

(1)
Denote $p^{-1}(1)$ by $T$.
Then, mapping each $P\in L$ to $P\wedge Q$ with $Q\in T$ defines an isomorphism
$\epsilon: L\overset{\simeq}{\longrightarrow} \bigwedge^2 V$, which is independent of the choice of $Q\in T$

(2)
Corresponding to the above extension, we let $L$ act on $V$ by an injective homomorphism $\tilde{\varphi}: L\hookrightarrow \mathrm{SL}(V)$
defined by $\tilde{\varphi}(P)(Q):= Q+p(Q)P$ for $P\in L$ and $Q \in V$.
Note that, if we define a subgroup $M$ of $\mathrm{SL}(V)$ by
\[M=\{ \sigma \in \mathrm{SL}(V): \sigma(P)=P \textrm{ for all } P\in L  \}, \]
then $\tilde{\varphi}$ induces an isomorphism $\tilde{\varphi}:L\overset{\simeq}{\longrightarrow} M$,
whose inverse $f:M\rightarrow L$ is obviously given by $f(\sigma) := \sigma(Q)-Q$ with any $Q\in T$.
We also remark that the action $L\simeq M\curvearrowright V$ preserves $T$, which coincides with the canonical action by translations;
in particular, this action of $L$ on $T$ is obviously simply transitive.

\begin{Ex}
\label{Ex}
{\rm
(1) If $V=E[n]$ for a Tate curve $E$, then we have a canonical extension (\ref{TateEx}) in Theorem \ref{FactTate}. \\
(2) If we fix a basis $(P,Q)$ of $V$, then we have an extension
\begin{equation}
\label{extbasis}
 0\rightarrow \langle P\rangle \rightarrow V \overset{p}{\longrightarrow}\mathbb{Z}/(n)\rightarrow 0,
\end{equation}
where $p:V\rightarrow \mathbb{Z}/(n)$ is given by $p(P)=0$ and $p(Q)=1$.}
\end{Ex}

\begin{Rem}
\label{commutative}
{\rm
(1)
Any extension (\ref{ext}) can becomes of the form (\ref{extbasis}) by taking a basis $(P,Q)$ with $P$ a generator of $L$ and $Q\in T$.

(2)
 For any extension (\ref{ext}),
the following diagram is commutative:
\[
\begin{CD}
L @ > \epsilon >> \bigwedge^2 V \\
@V \tilde{\varphi} V  V @ V \varphi V  V\\
M @ > \mathrm{quotient} >> \mathrm{SL}(V)^\mathrm{ab}.
\end{CD}
\]
To see this, we may assume that (\ref{ext}) is of the form (\ref{extbasis}) by (1).
Then, we obtain $\tilde{\varphi}(P)(Q)=\varphi_{P,Q}$ and $\epsilon(P)=P\wedge Q$ so that $\varphi(\epsilon (P))=\overline{\varphi_{P,Q}}\in \mathrm{SL}(V)^\mathrm{ab}$.}
\end{Rem}

For the given extension (\ref{ext}), we define a map $\tau: T\rightarrow T_n(V)$ by $\tau(Q)=[P,Q]$ (resp. $\tau(Q) =[Q, P, -(P+Q)]$) for $n=3$ (resp. $n=4$) with $P$ a generator of $L$.

\begin{Lem}
\label{tau}
(1)
The map $\tau$ is independent of the choice of $P$.

(2)
The map $\tau$ is compatible with $\epsilon: L\overset{\simeq}{\longrightarrow} \bigwedge^2V$. 

(3)
The map $\tau$ is bijective.
\end{Lem}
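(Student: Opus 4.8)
The plan is to prove part (2) first, deduce part (3) from it by a formal torsor argument, and settle part (1) by an explicit computation. Throughout I fix a generator $P$ of $L$, so that $\tau$ is the map $Q\mapsto [P,Q]$ for $n=3$ and $Q\mapsto [Q,P,-(P+Q)]$ for $n=4$. The crux of part (2) is to recognize the action of $\epsilon(P_0)\in\bigwedge^2 V$ on $T_n(V)$ concretely. By definition this action is $\psi\circ\varphi$, and Remark \ref{commutative}(2) gives $\varphi(\epsilon(P_0))=\overline{\tilde{\varphi}(P_0)}$ in $\mathrm{SL}(V)^{\mathrm{ab}}$; since $\psi$ is the isomorphism induced by $\tilde{\psi}$, we get $\psi(\overline{\tilde{\varphi}(P_0)})=\tilde{\psi}(\tilde{\varphi}(P_0))$. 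Thus $\epsilon(P_0)$ acts on $T_n(V)$ exactly as the canonical $\mathrm{SL}(V)$-action of the linear map $\tilde{\varphi}(P_0)$, and compatibility in (2) reduces to the identity $\tau(Q+P_0)=\tilde{\varphi}(P_0)\cdot\tau(Q)$.

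To verify this identity, I use that $\tilde{\varphi}(P_0)$ fixes $L$ pointwise (as $p(P)=0$) and sends each $Q\in T$ to $Q+P_0$ (as $p(Q)=1$), together with the fact that the $\mathrm{SL}(V)$-action on $T_n(V)$ is induced coordinatewise from its action on $V$. For $n=3$ this gives $\tilde{\varphi}(P_0)\cdot[P,Q]=[P,Q+P_0]=\tau(Q+P_0)$. For $n=4$, since $\tilde{\varphi}(P_0)(-(P+Q))=-(P+Q)+p(-(P+Q))P_0=-(P+Q+P_0)$, we obtain $\tilde{\varphi}(P_0)\cdot[Q,P,-(P+Q)]=[Q+P_0,P,-(P+Q+P_0)]=\tau(Q+P_0)$, which is precisely the compatibility asserted in (2). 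Part (3) is then formal: $\tau\colon T\to T_n(V)$ is compatible with the isomorphism $\epsilon\colon L\xrightarrow{\simeq}\bigwedge^2 V$ by (2), the $L$-action on $T$ is simply transitive (it is translation), and the $\bigwedge^2 V$-action on $T_n(V)$ is simply transitive by Corollary \ref{SimpTr}; an $\epsilon$-equivariant map between simply transitive torsors under isomorphic groups is automatically a bijection.

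For part (1) I must check that replacing $P$ by the other generator $-P$ of $L$ leaves $\tau$ unchanged. For $n=3$ this is immediate from $\overline{-P}=\overline{P}$ and $\{\overline{-P+Q},\overline{-P-Q}\}=\{\overline{P-Q},\overline{P+Q}\}$, so $[-P,Q]=[P,Q]$. For $n=4$ the claim is $[Q,P,-(P+Q)]=[Q,-P,P-Q]$, i.e. that $(Q,P,-(P+Q))$ and $(Q,-P,P-Q)$ lie in one $H$-orbit, and I will exhibit the witness explicitly. Take $\nu=[\mathrm{id}]+[(13)]+[(123)]\in\mathbb{F}_2^{\mathfrak{S}_3}$. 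Applied to $(P_1,P_2,P_3)=(Q,P,-(P+Q))$ it fixes the first coordinate, adds $2P_1+2P_3=-2P=2P$ to the second (yielding $P+2P=-P$), and adds $2P_2=2P$ to the third (yielding $-(P+Q)+2P=P-Q$); the added elements lie in $V[2]$, so the operations commute and the effect is additive. Finally $\nu\in N$, since its nonzero coefficients on $\mathfrak{A}_3=\{\mathrm{id},(123),(132)\}$ occur only at $\mathrm{id}$ and $(123)$, an even number. Hence $\nu\in H$ carries one triple to the other, proving (1).

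The genuinely delicate step is the $n=4$ case of (1): one has to identify the correct element of the finite group $H$ and confirm simultaneously that it transports one triple to the other and that it meets the parity condition cutting out $N$ inside $\mathbb{F}_2^{\mathfrak{S}_3}$. The remaining computations are routine, the only other place demanding care being the tracking of the values of $p$ and of the coordinatewise action in the $n=4$ verification of (2).
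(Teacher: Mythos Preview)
Your proof is correct. Parts (1) and (2) are essentially the paper's argument: for (2) the paper likewise reduces to the tautological compatibility of $\tau$ with $\tilde{\varphi}$ and then invokes Remark~\ref{commutative}(2), and for (1) in the case $n=4$ the paper uses the very same element $[\mathrm{id}]+[(13)]+[(123)]\in N$ that you exhibit.

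The one place you take a genuinely different route is part (3) for $n=4$. You invoke Corollary~\ref{SimpTr} (simple transitivity of $\bigwedge^2 V$ on $T_n(V)$), together with part (2), to conclude at once that an equivariant map between torsors is a bijection. The paper instead argues without appealing to Corollary~\ref{SimpTr}: it shows that the order-$2$ element of $L$ acts on $\tau(T)$ as $-1$, and since $-1$ has no fixed points on $T_4(V)$, stabilizers in $L$ are trivial, forcing $\#\tau(T)=4$. Your argument is shorter and cleaner given that Corollary~\ref{SimpTr} has already been established; the paper's argument has the virtue of being self-contained and, as the remark immediately following the lemma points out, actually yields an independent proof of Corollary~\ref{SimpTr}. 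Either approach is fine here.
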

\begin{proof}
(1)
The other generator of $L$ is $-P$ for both cases $n=3$ and $n=4$. If $n=3$, then it defines the same point in $\mathbb{P}(V)$ as $P$. If $n=4$, then we see that  $[Q,-P,-(-P+Q)]=([123]+[id]+[13])[Q,P,-(P+Q)]=[Q,P,-(P+Q)]$ for $Q\in T$. These prove (1).

(2)
First, note that $\tau$ is tautologically compatible with $\tilde{\varphi}: L\rightarrow \mathrm{SL}(V)$.
By Proposition \ref{SLab} (1), it is compatible with the composite map $L\overset{\tilde{\varphi}}{\longrightarrow} \mathrm{SL}(V)\twoheadrightarrow \mathrm{SL}(V)^\mathrm{ab}$,
which coincides with the composite map $L\overset{\epsilon}{\longrightarrow} \bigwedge^2V \overset{\varphi}{\longrightarrow} \mathrm{SL}(V)^\mathrm{ab}$ by Remark \ref{commutative} (2).
Thus (2) follows, since the action of $\bigwedge^2V$ on $T_n(V)$ is defined via $\varphi: \bigwedge^2V \overset{\simeq}{\longrightarrow} \mathrm{SL}(V)^\mathrm{ab}$.

(3)
Because the case $n=3$ is obvious from the definition of $\tau$, we show the case $n=4$.
By (2), we have an action of $L$ on $T':=\tau(T)$, which is transitive because the action of $L$ on $T$ is transitive.
The order 2 element $A$ of $L$ acts on $T'$ as $-1$; in fact, by Remark \ref{commutative}, we may assume that the given extension is of the form (\ref{extbasis}), in which case $A=2P$ acts on $T'$ as $-1$ as shown in Proposition \ref{SLab}.
Since $-1$ is fixed point free on $T_4(V)$, the stabilizer of any element of $T'$ in $L$ has no elements of order 2; that is, it is trivial.
This shows that $\# T'= \# T_4(V)(=4)$, and hence $\tau$ is bijective.
\end{proof}

\begin{Rem}
We can also deduce Corollary \ref{SimpTr} from Lemma \ref{tau} as follows.
Fix an extension (\ref{ext}).
By Lemma \ref{tau}, we identify the action $\bigwedge^2 V \curvearrowright T_n(V)$ with the action $L\curvearrowright T$, which is obviously simply transitive, 
\end{Rem}

\section{A bijection $w_n: T_n(E[n])\rightarrow \mu_n\sqrt[n]{\Delta_E}$ }
Here, let $n$ be $3$ or $4$, and $E$ be an elliptic curve given by a Weierstrass equation over a field $K$ of characteristic prime to $n$.
We apply the construction in Section 3 to $V=E[n]$, and construct a bijection $w_n: T_n(E[n])\rightarrow \mu_n\sqrt[n]{\Delta_E}$.
\subsection{The case $n=3$}
The result in this section is based on 5.5, [7].
Fix a Weierstrass equation
\begin{equation}
 y^2+a_1xy+a_3y=x^3+a_2x^2+a_4x+a_6
\end{equation}
for $E$, and denote its discriminant by $\Delta$.
Let $b_i$ $(i=2,4,6,8)$ be the constants defined in Definition \ref{D}.
For a point $P$ on $E$, we denote by $(x_P,y_P)$ its coordinate in terms of this Weierstrass equation,
and by $\Delta$ the discriminant of it.
If $P$ is a point of order 3, then $P$ satisfies $x_P = x_{-P}= x_{2P}$.
Since
\[ x_{2P}= \frac{x_P^4-b_4x_P^2-2b_6x_P-b_8}{4x_P^3+b_2x_P^2+2b_4x_P+b_6} \]
(III, 2.3, [8].),
the $x$-coordinate of any point of order 3 is a solution of the equation
\[ x=\frac{x^4-b_4x^2-2b_6x-b_8}{4x^3+b_2x^2+2b_4x+b_6};\]
that is,
\begin{equation}
 x^4+\frac{b_2}{3}x^3+b_4x^2+b_6x+\frac{b_8}{3}=0. 
\end{equation}
For $\{ X,Y \}\in T_3(E[3])$, let $x_1$ and $x_2$ be the $x$-coordinates of the elements in $X$, and $x_3$ and $x_4$ be the $x$-coordinates of the elements in $Y$.
Using this, we define 
\[ w_3(\{ X,Y \}) := b_4-3(x_1x_2+x_3x_4),\]
which is obviously $G_K$-equivariant.
\begin{Lem}
\label{cubic}
Let the notation be as above.
Then,
\begin{equation}
T^3-\Delta = \prod (T-w_3(\{ X,Y\})),
\end{equation}
where the product is taken over $T_3(E[3])$.
In other words,
$w_3$ defines a bijection $w_3:T_3(E[3])\rightarrow \mu_3\sqrt[3]{\Delta_E}$.
\end{Lem}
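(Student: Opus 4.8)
The plan is to recognize the right-hand side as the cubic whose roots are the three resolvent quantities attached to the quartic satisfied by the $x$-coordinates of the nonzero $3$-torsion points, and then to match its coefficients against $T^3-\Delta$ by a symmetric-function computation. Write $f(x)=x^4+\frac{b_2}{3}x^3+b_4x^2+b_6x+\frac{b_8}{3}$ for that quartic and let $x_1,x_2,x_3,x_4$ be its roots, so that $\mathbb{P}(E[3])=\{\overline{P_1},\overline{P_2},\overline{P_3},\overline{P_4}\}$ with $x_i=x_{P_i}$. The three elements of $T_3(E[3])$ are the three $2$-$2$ partitions of this set, and on the partition $\{\{x_i,x_j\},\{x_k,x_l\}\}$ the value of $w_3$ is $b_4-3(x_ix_j+x_kx_l)$. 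Hence the three values of $w_3$ are $w_3^{(m)}=b_4-3\alpha_m$, where $\alpha_1=x_1x_2+x_3x_4$, $\alpha_2=x_1x_3+x_2x_4$, $\alpha_3=x_1x_4+x_2x_3$ are exactly the roots of the resolvent cubic of $f$.

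First I would record the elementary symmetric functions of $\alpha_1,\alpha_2,\alpha_3$ in terms of the $b_i$, via the classical resolvent-cubic identities $\sum_m\alpha_m=e_2$, $\sum_{m<m'}\alpha_m\alpha_{m'}=e_1e_3-4e_4$, and $\alpha_1\alpha_2\alpha_3=e_1^2e_4+e_3^2-4e_2e_4$, where $e_1,\dots,e_4$ are the elementary symmetric functions of the $x_i$, so that $e_1=-b_2/3$, $e_2=b_4$, $e_3=-b_6$, $e_4=b_8/3$. Substituting $w_3^{(m)}=b_4-3\alpha_m$, I would expand $\prod_m(T-w_3^{(m)})$ and read off its three non-leading coefficients as symmetric functions of the $\alpha_m$. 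The coefficient of $T^2$ is $-\sum_m w_3^{(m)}=-(3b_4-3e_2)=0$ because $e_2=b_4$, so the cubic is automatically depressed.

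The heart of the matter is the remaining two coefficients, and here the single relation that makes everything collapse is the universal identity $4b_8=b_2b_6-b_4^2$ among the $b_i$. Using it, $e_1e_3-4e_4=(b_2b_6-4b_8)/3=b_4^2/3$, which forces the coefficient of $T$ to vanish, and a short computation then gives the constant term $-\prod_m w_3^{(m)}=27\alpha_1\alpha_2\alpha_3-b_4^3$, which the same identity rewrites as exactly $-\Delta=-(9b_2b_4b_6-b_2^2b_8-8b_4^3-27b_6^2)$. This yields $\prod(T-w_3(\{X,Y\}))=T^3-\Delta$. I expect the main obstacle to be the bookkeeping in the constant term, where several monomials in the $b_i$ must be reconciled and the identity $4b_8=b_2b_6-b_4^2$ applied at the right place; the vanishing of the $T$-coefficient serves as a useful intermediate checkpoint.

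Finally, to upgrade the polynomial identity to the bijection statement, I note that $3$ is invertible and $\Delta\neq 0$, so $T^3-\Delta$ is separable and its three roots are distinct. Since $w_3$ is $G_K$-equivariant and $\#T_3(E[3])=3$, the factorization $T^3-\Delta=\prod(T-w_3(\{X,Y\}))$ shows that $w_3$ maps $T_3(E[3])$ bijectively onto the set $\mu_3\sqrt[3]{\Delta_E}$ of cube roots of $\Delta$, as claimed.
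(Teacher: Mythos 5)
Your proof is correct, but it follows a genuinely different route from the paper's. The paper first verifies the identity $(4.3)$ for curves in Deuring normal form $y^2+\alpha xy+y=x^3$ (where $b_2=\alpha^2$, $b_4=\alpha$, $b_6=1$, $b_8=0$, $\Delta=\alpha^3-27$, and the computation reduces to the explicit factorization $Y^3+3\alpha Y^2+3\alpha^2Y+27=\prod(Y+3x_ix_j)$ using $x_1x_2x_3=-1$), and then transports the result to a general Weierstrass equation via the change-of-variables formulas of Remark 2.5, invoking the fact that every elliptic curve admits a Deuring normal form over $\bar K$ when $\mathrm{char}(K)\neq 3$. You instead prove the identity universally by recognizing the three values of $w_3$ as $b_4-3\alpha_m$ with $\alpha_m$ the roots of the resolvent cubic of the $3$-division quartic, and matching coefficients via the symmetric-function formulas $\sum\alpha_m=e_2$, $\sum\alpha_m\alpha_{m'}=e_1e_3-4e_4$, $\alpha_1\alpha_2\alpha_3=e_1^2e_4+e_3^2-4e_2e_4$ together with the standard relation $4b_8=b_2b_6-b_4^2$. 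I checked the bookkeeping: the $T^2$-coefficient vanishes since $e_2=b_4$, the $T$-coefficient is $-3b_4^2+9(e_1e_3-4e_4)=0$, and the constant term $27\alpha_1\alpha_2\alpha_3-b_4^3=b_2^2b_8-36b_4b_8+27b_6^2-b_4^3$ equals $-\Delta$ precisely because $9b_4(b_2b_6-b_4^2-4b_8)=0$. Your approach avoids passing to $\bar K$ and the normal-form existence result, and yields a polynomial identity valid over any base with $3$ invertible, at the cost of a heavier symmetric-function computation; the paper's approach trades that computation for a one-parameter verification plus a change-of-variables compatibility it needs anyway (Lemma 4.2). The final step, deducing the bijection from separability of $T^3-\Delta$, is the same in both.
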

\begin{proof}
We first show our assertion when $E$ is defined over a field $K$ and (4.1) is of the Deuring normal form
\[ y^2+\alpha xy + y =x^3\quad (\alpha \in \bar{K},\, \alpha^3\neq 27). \]
In this case, we have
\[ b_2=\alpha^2, b_4=\alpha, b_6=1, b_8=0, \mathrm{and } \, \Delta =\alpha^3-27.\]
Then, (4.2) is
\begin{equation}
X^4+\frac{1}{3}\alpha^2X^3+\alpha X^2 + X=0,
\end{equation}
and the solutions of (4.4) are just the $x$-coordinates $x_1,...,x_4$ of $E[3]$;
one of which is $0$, say $x_4=0$.
It follows that
\begin{equation}
X^3+\frac{1}{3}\alpha^2X^2+\alpha X+1=(X-x_1)(X-x_2)(X-x_3).
\end{equation}
Note that, under our present assumption, (4.3) becomes
\[ T^3-(\alpha^3 -27) = \prod_{1\leq i<j\leq 3} (T-\alpha +3x_ix_j). \]
Replacing $Y=T-\alpha$ reduces the problem to showing that
\begin{equation}
Y^3+3\alpha Y^2 +3\alpha^2Y+27 = (Y+3x_1x_2)(Y+3x_2x_3)(Y+3x_1x_3).
\end{equation}
This is true from the following computation:
Setting $X=3/Y$ in (4.5) and using $x_1x_2x_3=-1$,
we see that (4.5) becomes
\begin{equation*}
\begin{split}
\frac{1}{Y^3}(27+3\alpha^2 Y+3\alpha Y^2+Y^3) &= \frac{1}{Y^3}(3-x_1Y)(3-x_2Y)(3-x_3Y)\\
														   &= -\frac{x_1x_2x_3}{Y^3}(-3x_1^{-1}+Y)(-3x_2^{-1}+Y)(-3x_3^{-1}+Y)\\
														   &= \frac{1}{Y^3}(3x_2x_3+Y)(3x_1x_3+Y)(3x_1x_3+Y),
\end{split}
\end{equation*}
which is none other than (4.6).

Next, we prove the general case.
Suppose we are given two Weierstrass equations for $E$ over $K$ with coordinates $(x,y)$ and $(x',y')$.
We use the notation in Remark 2.5. (1).
By (2.1) and (2.3), we obtain
\begin{equation}
b_4-3(x_ix_j+x_kx_l)=u^4(b_4'-3(x_i'x_j'+x_k'x_l')).
\end{equation}
By (2.3) and (4.7), it follows that (4.3) for $(x,y)$ is equivalent to (4.3) for $(x',y')$.
Then, the assertion follows from the fact that any elliptic curve over $K$ has a Weierstrass equation of the Deuring normal form over $\bar{K}$.
(This requires $\mathrm{char}(K)\neq 3$. For the detail, see Proposition 1.3, Appendix A, [8].)
\end{proof}

Let us check how the map $w_3$ changes if we take another Weierstrass equation for $E$.
Let the notation be as in Remark \ref{VarCh} (1), and denote by $w_3'$ the map as above for another Weierstrass equation for $E$ with the coordinate $(x',y')$.
\begin{Lem}
\label{trans3}
We have $w_3=u^4w_3'$.
\end{Lem}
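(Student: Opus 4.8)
The plan is to track how $w_3$ transforms under the change of variables in Remark \ref{VarCh} (1), using the transformation formula for the $x$-coordinates together with the formula \eqref{} already established implicitly in the proof of Lemma \ref{cubic}. Recall that $w_3(\{X,Y\})=b_4-3(x_1x_2+x_3x_4)$, and that equation (2.7) in the proof of the previous lemma records precisely that
\[ b_4-3(x_ix_j+x_kx_l)=u^4\bigl(b_4'-3(x_i'x_j'+x_k'x_l')\bigr). \]
So the heart of the matter is already available: the claim $w_3=u^4w_3'$ is essentially a restatement of this identity, once we check that the partition $\{X,Y\}$ is matched correctly on both sides.

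First I would recall the relevant transformation laws. Under $x=u^2X+r$, the $x$-coordinate of a point $P$ in the $(x,y)$-system and its $x'$-coordinate in the $(x',y')$-system are related by $x_P=u^2 x_P'+r$. The crucial point is that this is the \emph{same} affine change of coordinates applied to every point of $E[3]$ simultaneously, so it respects the decomposition of $\mathbb{P}(E[3])$ into the partition $\{X,Y\}$: the four $x$-coordinates $x_1,x_2,x_3,x_4$ attached to $\{X,Y\}$ correspond, point by point, to the four primed coordinates $x_1',x_2',x_3',x_4'$ attached to the \emph{same} element $\{X,Y\}\in T_3(E[3])=T_3(E'[3])$ (the identification being canonical since the underlying group $E[3]$ does not change). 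Thus there is no reindexing ambiguity.

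Next I would carry out the elementary computation verifying the identity. Writing $x_k=u^2x_k'+r$ for each $k$, expand
\[ x_1x_2+x_3x_4=u^4(x_1'x_2'+x_3'x_4')+u^2r\bigl(x_1'+x_2'+x_3'+x_4'\bigr)+2r^2. \]
Combining this with the transformation law $u^4b_4'=b_4+rb_2+6r^2$ from (2.3), and using that the sum $\sum x_k'$ of the four roots of (4.2) for the primed equation equals $-b_2'/3$ (read off from equation (4.2)), the cross terms in $r$ cancel exactly, leaving $b_4-3(x_1x_2+x_3x_4)=u^4\bigl(b_4'-3(x_1'x_2'+x_3'x_4')\bigr)$. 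Since both sides are by definition $w_3(\{X,Y\})$ and $u^4w_3'(\{X,Y\})$ respectively, and this holds for every element of $T_3(E[3])$, we conclude $w_3=u^4w_3'$ as maps on $T_3(E[3])$.

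The only genuine obstacle is bookkeeping rather than conceptual: one must confirm that the identification $T_3(E[3])\cong T_3(E'[3])$ is the identity (both Weierstrass equations present the \emph{same} elliptic curve $E$, so $E[3]$ and hence $\mathbb{P}(E[3])$ and $T_3$ are literally unchanged), and that the partition labels $(x_1,x_2\,|\,x_3,x_4)$ match up under the coordinate change. Once this is clear, the computation is the same one already performed to obtain (2.7) in Lemma \ref{cubic}, so the proof reduces to citing that identity and noting it holds partition-by-partition. I expect the cleanest write-up simply invokes (2.7) directly rather than recomputing the expansion.
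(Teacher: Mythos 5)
Your proof is correct and follows essentially the same route as the paper: the paper's own argument also reads off $x_P+x_Q+x_{P+Q}+x_{P-Q}=-b_2/3$ from the coefficient of $x^3$ in (4.2) and then combines this with the transformation relations $x=u^2X+r$, $u^2b_2'=b_2+12r$, $u^4b_4'=b_4+rb_2+6r^2$ of Remark 2.5 (1) to get $w_3([P,Q])=u^4w_3'([P,Q])$. The identity you cite is equation (4.7) of the paper (not (2.7)), but that is only a labelling slip; the computation and the observation that the partition is canonically preserved are exactly what is needed.
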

\begin{proof}
Let $(P,Q)$ be a basis of $E[3]$.
Then $x_P$, $x_Q$, $x_{P+Q}$, and $x_{P-Q}$ are all distinct, and they must satisfy the above equation.
Considering the coefficients of $x^3$ in (4.2), we obtain
\[ x_P+x_Q+x_{P+Q}+x_{P-Q} = -\frac{b_2}{3}. \]
Using this equality and the relations in Remark 2.5 (1),
we obtain $w_3([P,Q]) =u^4 w_3'([P,Q])$. 
\end{proof}
Lemma 4.2 and $\Delta=u^{12}\Delta'$ imply that the diagram
\[
\xymatrix{
  & & \mu_3\sqrt[3]{\Delta}  \\
T_3(E[3]) \ar[urr]^{w_3} \ar[drr]^{w_3'} & & \\
  & & \mu_3\sqrt[3]{\Delta'} \ar[uu]_{u^4\times}
  }
\]
is commutative.
Since the vertical map $u^4\times$ is an isomorphism of $\mu_3$-torsors over $k$,
we identify $w_3$ and $w_3'$,
and we consider them as a map from $T_3(E[3])$ to the $\mu_3$-torsor $\mu_3\sqrt[3]{\Delta_E}$.
We denote this map again by $w_3$.
\begin{Rem}
\label{general3}
{\rm
The construction of $T_3(E[3])$ and $w_3$ are easily generalized to the case that $E$ is an elliptic curve over a scheme $S$ with 3 invertible.
This is done by considering the locally constant \'etale sheaf $E[3]$ over  $S$ and by \'etale descent.}
\end{Rem}
\subsection{The case $n=4$}
Since $K$ is of characteristic prime to 2, we can take a Weierstrass equation for $E$ of the form
\[ y^2 = x^3+a_2x^2+a_4x+a_6. \]
In this subsection, we only consider  Weierstrass equations of this form.
In order to write a $4$-th root of $\Delta_E$ in terms of $E[4]$,
first we prove the following lemma. 
(See also [4, \S 11].)
\begin{Lem}
\label{lem4}
Let the notation be as above.
Let $A\in E$ be a point of order 2, and $P$ and $P'$ be points of $E$ satisfying $2P=2P'=A$ and $P \neq \pm P'$.
Then we have
\[ \left( \frac{y_P-y_{P'}}{x_P-x_{P'}} \right)^2 = x_{A} -x_{P-P'}. \]
\end{Lem}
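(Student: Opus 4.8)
The plan is to reinterpret the left-hand side as the square of the slope of the secant line through $P$ and $P'$, and then to reduce the whole identity to the single relation $x_P+x_{P'}=2x_A$ between the $x$-coordinates of two half-points of $A$. Throughout write $f(x)=x^3+a_2x^2+a_4x+a_6$, so that $y_R^2=f(x_R)$ for every $R\in E$; a nonzero $2$-torsion point is exactly a point $(e,0)$ with $f(e)=0$, and in particular $A=(x_A,0)$. From $2P=2P'=A$ and $P\neq\pm P'$ I would first check that $A$, $P+P'$ and $P-P'$ are three \emph{distinct} nonzero $2$-torsion points (e.g.\ $P+P'=O$ forces $P=-P'$, $A=P+P'$ forces $P=P'$, etc.), hence are all of them; thus $x_A,\,x_{P+P'},\,x_{P-P'}$ are the three roots of $f$, and in particular $x_A+x_{P+P'}+x_{P-P'}=-a_2$. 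Moreover $P\neq\pm P'$ gives $x_P\neq x_{P'}$, so $\lambda:=(y_P-y_{P'})/(x_P-x_{P'})$ is the genuine slope of the secant through $P$ and $P'$.

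Next I would feed this into the group law. The third intersection of that secant with $E$ is $-(P+P')$, a point of order $2$ with $x$-coordinate $x_{P+P'}$; substituting the line $y=\lambda x+\nu$ into the Weierstrass cubic and comparing the coefficient of $x^2$ gives $x_P+x_{P'}+x_{P+P'}=\lambda^2-a_2$, that is $\lambda^2=a_2+x_{P+P'}+x_P+x_{P'}$. Substituting $a_2=-(x_A+x_{P+P'}+x_{P-P'})$ collapses this to $\lambda^2=x_P+x_{P'}-x_A-x_{P-P'}$, so that the assertion $\lambda^2=x_A-x_{P-P'}$ becomes exactly equivalent to the single claim $x_P+x_{P'}=2x_A$.

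For that claim I would exploit that $P$ is a half of $A$. The tangent to $E$ at $P$ meets $E$ again at $-2P=-A=A$, so it passes through $(x_A,0)$; equating its two expressions $f'(x_P)/(2y_P)$ and $y_P/(x_P-x_A)$ for the slope and clearing denominators yields $(x_P-x_A)f'(x_P)=2y_P^2=2f(x_P)$, and the identical computation holds for $P'$. Writing $f=(x-x_A)h$ with $h=(x-x_{P+P'})(x-x_{P-P'})$, one gets the polynomial identity $(x-x_A)f'(x)-2f(x)=(x-x_A)\bigl((x-x_A)h'(x)-h(x)\bigr)$, and a one-line expansion shows $(x-x_A)h'(x)-h(x)=x^2-2x_A\,x+c$ for an explicit constant $c$. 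Since $x_P\neq x_A$ and $x_{P'}\neq x_A$ (as $P,P'$ have order $4$), both $x_P$ and $x_{P'}$ are roots of this monic quadratic; being distinct they are its two roots, and Vieta's formula gives $x_P+x_{P'}=2x_A$, which is what was needed.

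The main obstacle is precisely this last step: realizing $x_P$ and $x_{P'}$ as the two roots of a single quadratic whose linear coefficient is $-2x_A$. The secant computation and the $2$-torsion bookkeeping are routine once the chord-tangent group law is in hand, and the real content sits in the factorization $(x-x_A)f'(x)-2f(x)=(x-x_A)(x^2-2x_A\,x+c)$, where the hypothesis $f(x_A)=0$ does all the work.
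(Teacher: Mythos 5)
Your proposal is correct and follows essentially the same route as the paper: both reduce the identity, via the secant through $P,P'$ and the relation $x_A+x_{P+P'}+x_{P-P'}=-a_2$, to the key claim $x_P+x_{P'}=2x_A$, and both establish that claim by exhibiting $x_P,x_{P'}$ as the two roots of a quadratic coming from the tangency condition at a $4$-division point above $A$ and applying Vieta. Your factorization $(x-x_A)f'(x)-2f(x)=(x-x_A)\bigl((x-x_A)h'(x)-h(x)\bigr)$ produces exactly the same quadratic the paper obtains by substituting $y_T^2=f(x_T)$ into the chord-addition formula, so the difference is purely presentational.
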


\begin{proof}
Denote $B$ and $C$ for the points of order $2$ on $E$ other than $A$.
Then, note that
\[ x^3+a_2x^2+a_4x+a_6 = (x-x_A)(x-x_B)(x-x_C), \]
and in particular we have $x_A+x_B+x_C=-a_2$.
Since $P+P'+(-P-P')=O$,
the three points $P,P'$, and $-P-P'$ are colinear.
So the solutions for the equation
\[ \left( \left( \frac{y_P-y_{P'}}{x_P-x_{P'}} \right) (x-x_P) +y_P \right)^2 = x^3+a_2x^2+a_4x+a_6\]
are exactly $x_P,x_{P'}$, and $x_{-P-P'} (= x_{P+P'})$.
Considering the coefficients of $x^2$, we have
\begin{equation}
x_P+x_{P'}+x_{P+P'}=\left( \frac{y_P-y_{P'}}{x_P-x_{P'}} \right)^2 -a_2.
\end{equation}

On the other hand, we show
\begin{equation}
x_P+x_{P'}=2x_{A}.
\end{equation}
To prove (4.9), let $T$ be a $4$-division point of $E$ with $2T = A$.
Then the same argument as above shows that
\begin{equation}
2x_T+x_A = \left( \frac{y_T-y_{A}}{x_T-x_{A}} \right)^2 -a_2.
\end{equation}
Combining (4.10) with
\begin{equation*}
 \begin{split}
 (y_T-y_{A})^2 &= {y_T}^2\\
              &= (x_T-x_A)(x_T-x_B)(x_T-x_C) ,
 \end{split}
\end{equation*}
we have the following quadratic equation for $x_T$:
\[ {x_T}^2 -(-a_2+x_A-x_B-x_C)x_T-({x_A}^2+a_2x_A+x_Bx_C)=0.\]
Since the solutions for this equation are exactly $x_P$ and $x_{P'}$,
we obtain
\begin{equation*}
 \begin{split}
 x_P+x_{P'} &= -a_2+x_A- x_B-x_C \\
            &= 2x_A,
 \end{split}
\end{equation*}
and hence (4.9) follows.

Now, combining (4.8) and (4.9), we obtain
\begin{equation}
 x_A+(x_A+x_{P+P'})=\left( \frac{y_P-{y_P}'}{x_P-{x_P}'} \right)^2 -a_2. 
\end{equation}
Since 
\[ \{ A,P+P',P-P' \} = E[2]\setminus O, \]
we have 
\begin{equation}
 x_A+x_{P+P'}+x_{P-P'}=-a_2. 
\end{equation}
Then, (4.11) and (4.12) show the lemma.
\end{proof}
An immediate corollary of Lemma \ref{lem4} is the following:

\begin{Cor}
\label{4th}
For $(P,Q,R)\in S_4(E[4])$,
set 
\[ P' =P+2Q, Q'=Q+2R, R'=R+2P.\]
Then
\[ \tilde{w}_4(P,Q,R):= 2\frac{y_P-y_{P'}}{x_P-x_{P'}}\cdot \frac{y_Q-y_{Q'}}{x_Q-x_{Q'}}\cdot \frac{y_R-y_{R'}}{x_R-x_{R'}} \]is a 4-th root of $\Delta$,
and the map $\tilde{w}_4: S_4(E[4])\rightarrow \mu_4\sqrt[4]{\Delta}$ makes the following diagram commutative:
\[
\begin{CD}
S_4(E[4]) @ > \tilde{w}_4 >> \mu_4\sqrt[4]{\Delta} \\
  @ VV\tilde{\pi} V @ VV\textrm{squaring} V\\
S_2(E[2]) @ > \tilde{w}_2 >> \pm \sqrt{\Delta},
\end{CD}
\]
where $\tilde{w}_2: S_2(E[2])\rightarrow \pm \sqrt{\Delta}$ is a canonical bijection given by
\[ \tilde{w}_2(A,B,C):=4(x_A-x_B)(x_B-x_C)(x_C-x_A). \]
\end{Cor}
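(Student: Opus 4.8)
The plan is to reduce everything to three applications of Lemma~\ref{lem4}, one for each of the three factors defining $\tilde w_4$, and then to compare the resulting square with the classical expression of the discriminant in terms of the differences of the roots of the defining cubic. Nothing beyond Lemma~\ref{lem4} and a discriminant identity is needed.

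First I would set $A:=2P$, $B:=2Q$, $C:=2R$, so that by the definition of $S_4(E[4])$ the points $A,B,C$ are precisely the three nonzero $2$-torsion points $E[2]\setminus O$. For the first factor, observe that $P'=P+2Q=P+B$ satisfies $2P'=2P+2B=2P=A$, since $2B=4Q=O$, and that $P-P'=-2Q=-B=B$; moreover $P\neq\pm P'$ because $P'-P=B\neq O$ and $P'+P=A+B\neq O$. Hence Lemma~\ref{lem4}, applied with the order-two point $A$ and the pair $P,P'$, gives $\left(\frac{y_P-y_{P'}}{x_P-x_{P'}}\right)^2=x_A-x_{P-P'}=x_A-x_B$. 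The identical computation applied to $(Q,Q')$ and to $(R,R')$ yields $\left(\frac{y_Q-y_{Q'}}{x_Q-x_{Q'}}\right)^2=x_B-x_C$ and $\left(\frac{y_R-y_{R'}}{x_R-x_{R'}}\right)^2=x_C-x_A$, the indices shifting cyclically.

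Multiplying the three identities and accounting for the factor $2$ in the definition of $\tilde w_4$, I would obtain $\tilde w_4(P,Q,R)^2=4(x_A-x_B)(x_B-x_C)(x_C-x_A)=\tilde w_2(A,B,C)=\tilde w_2(\tilde\pi(P,Q,R))$, which is exactly the asserted commutativity after squaring. It then remains to check that $\tilde w_2$ takes values in $\pm\sqrt\Delta$, i.e. that $\tilde w_2(A,B,C)^2=\Delta$. Squaring gives $16\prod_{i<j}(e_i-e_j)^2$, where $e_1=x_A,\,e_2=x_B,\,e_3=x_C$ are the roots of $x^3+a_2x^2+a_4x+a_6$, and this equals $\Delta$ by the standard identity $\Delta=16\,\mathrm{disc}(x^3+a_2x^2+a_4x+a_6)$, which one verifies directly from the $b_i$-formulas of Definition~\ref{D}. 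Consequently $\tilde w_4(P,Q,R)^4=\tilde w_2(A,B,C)^2=\Delta$, so $\tilde w_4(P,Q,R)\in\mu_4\sqrt[4]\Delta$ and the diagram commutes; that $\tilde w_2$ is $\mathfrak{A}_3$-invariant and sign-changing under transpositions, hence descends to a bijection onto $\pm\sqrt\Delta$, follows from the cyclic symmetry of the product.

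The genuinely routine part is the triple application of Lemma~\ref{lem4}; the only care required is in tracking which $2$-torsion point each $P-P'$ equals, so that the three quadratic factors assemble into the cyclically symmetric product defining $\tilde w_2$. The one external input I expect to be the mild obstacle is the discriminant identity $\tilde w_2^2=\Delta$: although classical, it must be matched against the specific normalization of $\Delta$ in Definition~\ref{D}, which is a short but sign-sensitive computation with $b_2=4a_2$, $b_4=2a_4$, $b_6=4a_6$, $b_8=4a_2a_6-a_4^2$, and this is the step where an error would most easily creep in.
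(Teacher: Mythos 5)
Your proposal is correct and is exactly the argument the paper intends: the paper offers no written proof, declaring the statement an ``immediate corollary'' of Lemma~\ref{lem4}, and your three applications of that lemma (with $A=2P$, $B=2Q$, $C=2R$ and the cyclic bookkeeping $P-P'=B$, $Q-Q'=C$, $R-R'=A$) together with the identity $\Delta=16\,\mathrm{disc}(x^3+a_2x^2+a_4x+a_6)$ supply precisely the missing details. Your side remark that $\tilde w_2$ is a bijection only after descending to $T_2(E[2])$ is also the right reading of the paper's slightly loose phrasing.
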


\begin{Rem}
\label{ch4}
{\rm
Let the notation be as in Corollary \ref{4th}.
Then, we see that
\begin{equation*}
\begin{split}
\tilde{w}_4(-P,Q,R) &=-\tilde{w}_4(P,Q,R)\\
\tilde{w}_4(P',Q,R) &=\tilde{w}_4(P,Q,R)
\end{split}
\end{equation*}
for $(P,Q,R)\in S_4(E[4])$.
We also have the analogous properties for $Q$ and $R$.}
\end{Rem}
Suppose that we are given two Weierstrass equations defining $E$ with coordinates $(x,y)$ and $(x',y')$.
Denote the above maps by $\tilde{w}_4$ and $\tilde{w}_4'$,
and their discriminants by $\Delta_4$ and $\Delta_4'$, respectively.
\begin{Lem}
\label{trans4}
We have $\tilde{w}_4=u^3\tilde{w}_4'$. 
\end{Lem}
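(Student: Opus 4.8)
The plan is to reduce everything to the transformation behaviour of a single slope $\frac{y_P-y_{P'}}{x_P-x_{P'}}$ under a change of Weierstrass coordinates. First I would invoke Remark \ref{VarCh}~(2): since both equations are of the special form $y^2=x^3+a_2x^2+a_4x+a_6$, the change of variables relating the coordinates $(x,y)$ and $(x',y')$ must be $x=u^2x'+r$, $y=u^3y'$ for some $u\in K^\times$ and $r\in K$, with no $sX+t$ term appearing in the expression for $y$. The decisive point is that the auxiliary points $P'=P+2Q$, $Q'=Q+2R$, $R'=R+2P$ entering the definition of $\tilde w_4$ in Corollary \ref{4th} are defined purely group-theoretically, hence denote the same geometric points in either coordinate system; only their coordinate expressions change.

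Next I would compute the effect of this change of variables on one factor. For any two points $S,S'$ we have $y_S=u^3y'_S$, $y_{S'}=u^3y'_{S'}$, $x_S=u^2x'_S+r$, and $x_{S'}=u^2x'_{S'}+r$, so that
\[
\frac{y_S-y_{S'}}{x_S-x_{S'}}=\frac{u^3(y'_S-y'_{S'})}{u^2(x'_S-x'_{S'})}=u\cdot\frac{y'_S-y'_{S'}}{x'_S-x'_{S'}}.
\]
Applying this to the three factors $(P,P')$, $(Q,Q')$, $(R,R')$ and multiplying, the constant $2$ in the definition being unaffected, yields $\tilde w_4(P,Q,R)=u^3\,\tilde w_4'(P,Q,R)$ for every $(P,Q,R)\in S_4(E[4])$. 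This is exactly the asserted identity, and it is consistent with the relation $\Delta_4=u^{12}\Delta_4'$ from Remark \ref{VarCh}~(1), since $(u^3)^4=u^{12}$ matches the fact that $\tilde w_4$ and $\tilde w_4'$ are $4$-th roots of $\Delta_4$ and $\Delta_4'$ respectively.

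There is essentially no serious obstacle here; the only point requiring care is the vanishing of the $s$- and $t$-terms in the change of variables. Had we allowed a general Weierstrass form, the expression for $y$ would contain a term $sX+t$, and each slope would acquire an additive correction $s/u^2$ rather than transforming by a clean scalar $u$; restricting throughout Subsection 4.2 to equations of the form $y^2=x^3+a_2x^2+a_4x+a_6$ (possible since $\mathrm{char}(K)\neq 2$) is precisely what makes the slopes rescale multiplicatively and the proof immediate.
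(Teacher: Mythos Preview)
Your argument is correct and is exactly the approach the paper intends: the paper's proof consists of the single sentence ``This is obvious from Remark~\ref{VarCh}~(2),'' and what you have written is precisely the computation that makes this obvious, namely that under $x=u^2x'+r$, $y=u^3y'$ each slope scales by $u$ and hence the triple product by $u^3$. Your closing remark about why the restriction to the special Weierstrass form matters is also to the point.
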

\begin{proof}
This is obvious from Remark \ref{VarCh} (2).
\end{proof}
 Lemma \ref{trans4} and $u^{12}\Delta'=\Delta$ give the following commutative diagram:
\[
\xymatrix{
  & & \mu_4\sqrt[4]{\Delta}  \\
S_4(E[4]) \ar[urr]^{\tilde{w}_4} \ar[drr]^{\tilde{w}_4'} & & \\
  & & \mu_4\sqrt[4]{\Delta'} \ar[uu]_{u^3\times}
  }
\]
By the same reason explained at the end of the last subsection,
we identify $\tilde{w}_4$ and $\tilde{w}_4'$,
and we consider them as a map from $S_4(E[4])$ to the $\mu_4$-torsor $\mu_4\sqrt[4]{\Delta_E}$.
Denote this map simply by $\tilde{w}_4$.
\begin{Lem}
\label{factor4}
The map $\tilde{w}_4: S_4(E[4])\rightarrow \mu_4\sqrt[4]{\Delta_E}$ factors through the quotient $T_4(E[4])$. 
\end{Lem}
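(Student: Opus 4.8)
The plan is to show that $\tilde{w}_4$ is constant on the fibers of $S_4(E[4])\to T_4(E[4])$, i.e. that it is invariant under the subgroup $H=\mathfrak{A}_3\ltimes N$ of $G$ from Definition \ref{tors4}. It suffices to verify $\tilde{w}_4(h\cdot s)=\tilde{w}_4(s)$ when $h$ runs over a generating set of $H$, namely the cyclic permutations in $\mathfrak{A}_3$ together with generators of $N\subset\mathbb{F}_2^{\mathfrak{S}_{\,3}}$. Throughout I would use the two transformation rules of Remark \ref{ch4} (a sign change under negating a coordinate, and invariance under the substitutions $P\mapsto P'=P+2Q$, $Q\mapsto Q'=Q+2R$, $R\mapsto R'=R+2P$), together with the identity $2P+2Q+2R=0$, valid because $\{2P,2Q,2R\}=E[2]\setminus O$.

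First I would dispose of $\mathfrak{A}_3$: a cyclic permutation of $(P,Q,R)$ simultaneously cycles the three factors $\tfrac{y_P-y_{P'}}{x_P-x_{P'}}$, $\tfrac{y_Q-y_{Q'}}{x_Q-x_{Q'}}$, $\tfrac{y_R-y_{R'}}{x_R-x_{R'}}$ of $\tilde{w}_4$ (the cyclic pattern $P'=P+2Q$, $Q'=Q+2R$, $R'=R+2P$ is respected), so the product is unchanged. The substance is the $\mathbb{F}_2^{\mathfrak{S}_{\,3}}$ direction, where I claim the sharper transformation law
\[ \tilde{w}_4([\sigma]\cdot s)=r([\sigma])\,\tilde{w}_4(s)\qquad(\sigma\in\mathfrak{S}_3). \]
From this, $N=\ker r$ acts trivially, and together with $\mathfrak{A}_3$-invariance we get that $H$ acts trivially. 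Since $r$ is a homomorphism and the $[\sigma]$ commute, the law extends multiplicatively from the generators $[\sigma]$ to all of $\mathbb{F}_2^{\mathfrak{S}_{\,3}}$, so checking it on the six $[\sigma]$ is enough.

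To prove the law on each $[\sigma]$ I would unwind its coordinate action from Remark \ref{mis4}. The three transpositions realize exactly the invariant substitutions of Remark \ref{ch4}: for instance $[(12)]$ sends $(P,Q,R)$ to $(P+2Q,Q,R)=(P',Q,R)$, and similarly $[(23)]$ and $[(13)]$ give $R\mapsto R'$ and $Q\mapsto Q'$; each fixes $\tilde{w}_4$, matching $r=+1$ on transpositions. For the three $3$-cycles (including the class of $\mathrm{id}$) the added $2$-torsion is the \emph{wrong} one, and here $2P+2Q+2R=0$ is decisive: e.g. $[\mathrm{id}]$ sends $(P,Q,R)$ to $(P,Q+2P,R)$, and since $2P=-2Q-2R$ one has $Q+2P=-(Q+2R)=-Q'$, so by the sign rule $\tilde{w}_4([\mathrm{id}]\cdot s)=-\tilde{w}_4(P,Q',R)=-\tilde{w}_4(s)$; the same rewriting yields $R+2Q=-R'$ and $P+2R=-P'$ for $[(123)]$ and $[(132)]$, each negating $\tilde{w}_4$, matching $r=-1$ on $3$-cycles. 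This establishes the law and finishes the argument. The one delicate point — the step I would write out most carefully — is precisely this bookkeeping: reading off which coordinate each $[\sigma]$ modifies and by which multiple of $2$, and then using $2P+2Q+2R=0$ to convert that modification into one of the two standard moves of Remark \ref{ch4} with the correct sign.
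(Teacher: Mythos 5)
Your proposal is correct and follows essentially the same route as the paper: establish $\mathfrak{A}_3$-invariance by cycling the three factors, then prove the transformation law $\tilde{w}_4([\sigma]\cdot s)=r([\sigma])\tilde{w}_4(s)$ via Remark \ref{ch4} and the relation $2P+2Q+2R=0$, so that $N=\ker r$ (hence all of $H$) acts trivially. The only cosmetic difference is that you verify all six generators $[\sigma]$ directly, whereas the paper reduces to the two cases $[\mathrm{id}]$ and $[(12)]$ by noting that each $[\sigma]$ is conjugate to one of these under $\mathfrak{A}_3$.
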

\begin{proof}
We use the notation in Subsection 3.3 and Corollary \ref{4th}.
It is obvious that $\tilde{w}_4$ is invariant under the action of $\mathfrak{A}_3$.
We claim that for each $(P,Q,R)\in S_4(E[4])$ we have
\begin{equation}
\tilde{w}_4([\sigma](P,Q,R))= \left \{
\begin{array}{l}
-\tilde{w}_4(P,Q,R) ~ (\textrm{if} \,
\sigma
\in \mathfrak{S}_3 \textrm{ is even})\vspace{0.2cm}\\
\tilde{w}_4(P,Q,R)  ~ (\textrm{if} \,
\sigma
\in \mathfrak{S}_3 \textrm{ is odd}).
\end{array}
\right.
\end{equation}
Since every $[\sigma]$ for even (resp. odd) $\sigma$ is conjugate to $[\mathrm{id}]$ (resp. to $[(12)]$) by an element in $\mathfrak{A}_3\subset \mathrm{Aut}(S(E[4]))$,
it is enough to check the claim for $[\mathrm{id}]$ and $[(12)]$.
In these cases, we see that
\begin{equation*}
\begin{split}
\tilde{w}_4([\mathrm{id}](P,Q,R)) &= \tilde{w}_4(P,-Q',R)\\
								&=-\tilde{w}_4(P,Q,R)\\
\tilde{w}_4([(12)](P,Q,R)) &= \tilde{w}_4(P',Q,R)\\
								&=\tilde{w}_4(P,Q,R)
\end{split}
\end{equation*}
by Remark \ref{ch4}.
\end{proof}
Lemma \ref{factor4} gives us a map 
\begin{equation*}
\begin{split}
w_4:T_4(E[4]) &\longrightarrow \mu_4 \sqrt[4]{\Delta_E}\\
[P,Q,R]&\mapsto \tilde{w}_4(P,Q,R)
\end{split}
\end{equation*}
induced from $\tilde{w}_4$, which is $G_K$-equivariant as in the case $n=3$.
\begin{Rem}
\label{4and2}
{\rm
The commutative diagram in Corollary \ref{4th} induces a commutative diagram
\[
\begin{CD}
T_4(E[4]) @ > w_4 >> \mu_4\sqrt[4]{\Delta} \\
  @ VV\pi V @ VV\textrm{squaring} V\\
T_2(E[2]) @ > w_2 >> \pm \sqrt{\Delta}
\end{CD}
\]
of $G_K$-sets.
Here, $w_2$ is the map induced from $\tilde{w}_2$.
By Remark \ref{ch4}, $w_4$ is compatible with $\mathrm{pr}_2: G/H\rightarrow \{ \pm 1\}$.}
\end{Rem}
\begin{Cor}
\label{bij4}
The map $w_4:T_4(E[4])\rightarrow \mu_4\sqrt[4]{\Delta_E}$ is bijective.
\end{Cor}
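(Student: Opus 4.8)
The plan is to leverage the commutative square of Remark \ref{4and2} together with the count $\#T_4(E[4]) = 4 = \#\mu_4\sqrt[4]{\Delta_E}$ coming from Corollary \ref{ord} (the latter because $\mu_4\sqrt[4]{\Delta_E}$ is a $\mu_4$-torsor). Since source and target have the same finite cardinality, it suffices to prove that $w_4$ is injective.

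First I would record the auxiliary fact that the bottom map $w_2: T_2(E[2]) \to \pm\sqrt{\Delta}$ is a bijection. The set $T_2(E[2])$ consists of the two cyclic orders on $E[2]\setminus O$, and these are sent by $\tilde{w}_2$ to $\pm 4(x_A-x_B)(x_B-x_C)(x_C-x_A)$; this quantity is nonzero (the $x$-coordinates of the three distinct $2$-torsion points are distinct) and its square is $\Delta$, so the two cyclic orders map to the two distinct square roots of $\Delta$. In particular $w_2$ is injective.

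Now for injectivity of $w_4$, suppose $w_4(t_1) = w_4(t_2)$. Squaring and using the commutativity of the diagram in Remark \ref{4and2} gives $w_2(\pi(t_1)) = w_2(\pi(t_2))$, whence $\pi(t_1) = \pi(t_2)$ since $w_2$ is injective. Thus $t_1$ and $t_2$ lie in a common fiber of $\pi$. By Remark \ref{mis4}, the subgroup $\{\pm 1\} = \ker(\mathrm{pr}_1) \subset G/H$ preserves each fiber of $\pi$; since $G/H$ acts simply transitively on $T_4(E[4])$ (used in the proof of Corollary \ref{ord}) and each fiber of $\pi$ has exactly $\#T_4(E[4])/\#T_2(E[2]) = 4/2 = 2$ elements, each fiber is a single $\{\pm 1\}$-orbit, i.e.\ of the form $\{t, -t\}$. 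Hence $t_2 \in \{t_1, -t_1\}$. Finally I would rule out $t_2 = -t_1$: by Remark \ref{4and2} the map $w_4$ is compatible with $\mathrm{pr}_2 : G/H \to \{\pm 1\} \subset \mu_4$ (equivalently, from Remark \ref{ch4} one has $\tilde{w}_4(-P,-Q,-R) = -\tilde{w}_4(P,Q,R)$ while $-[P,Q,R] = [-P,-Q,-R]$ by Remark \ref{mis4}), so $w_4(-t_1) = -w_4(t_1)$. As $-1 \neq 1$ in $\mu_4$, this forces $w_4(-t_1) \neq w_4(t_1)$, so $t_2 = t_1$. This proves injectivity, and bijectivity follows from the equality of cardinalities.

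The only genuinely delicate points are the determination of the fiber structure of $\pi$ (that each fiber is exactly a $\{\pm 1\}$-orbit $\{t,-t\}$) and the verification that $w_2$ is a bijection; both are bookkeeping consequences of results already in hand — the simple transitivity of the $G/H$-action from Corollary \ref{ord} and the explicit formula for $\tilde{w}_2$ in Corollary \ref{4th} — so once the diagram of Remark \ref{4and2} is in place the argument is essentially automatic.
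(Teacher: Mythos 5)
Your proof is correct and uses exactly the same two ingredients as the paper's: the compatibility of $w_4$ with $\mathrm{pr}_2:G/H\to\{\pm1\}$ to separate the two elements within each fiber of $\pi$, and the commutative square with $w_2$ from Remark \ref{4and2} to separate distinct fibers. The only difference is packaging — you prove injectivity and conclude by counting, while the paper directly exhibits the image $\{\pm w_4(X),\pm w_4(\sigma X)\}$ as four distinct elements — so this is essentially the paper's argument.
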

\begin{proof}
Fix an element $X \in T_4(E[4])$.
Since $w_4$ is compatible with $\mathrm{pr}_2:G/H\rightarrow \{\pm 1\}$,
\begin{equation*}
 \begin{split}
w_4(T_4(E[4])) &= w_4((G/H)\cdot X)\\
           &= \{ \pm w_4(X), \pm w(\sigma X) \}
 \end{split}
\end{equation*}
for any odd permutation $\sigma\in \mathfrak{S}_3$.
The commutativity of the diagram in Remark 4.11 shows that
$\pm w_4(X)$ and $\pm w(\sigma X)$ belong to distinct fibers of the squaring map
 $\mu_4\sqrt[4]{\Delta_4}\rightarrow \pm \sqrt{\Delta_4}$.
Thus, we obtain $\# w_4(T_4(E[4]))= 4$ and hence $w_4$ is bijective.
\end{proof}
\begin{Rem}
\label{general4}
{\rm
For the same reason as in Remark \ref{general3}, we can define $T_4(E[4])$ and $w_4$
 for an elliptic curve over a scheme on which 4 is invertible.}
\end{Rem}

\subsection{The Tate curve case}

Let $E=E_q$ be the Tate curve over the field $K=\mathbb{Q}((q))$ of Laurent series.
By Theorem \ref{FactTate} (1), $E[n]$ is canonically an extension of $\mathbb{Z}/(n)$ in the category of $G_K$-modules:
\[ 0\rightarrow \mu_n\rightarrow E[n] \rightarrow \mathbb{Z}/(n)\rightarrow 0. \]
For this extension, we use the notations and results in Subsection 3.3 by setting $V=E[n$].
In our Tate curve case, we identify
\begin{equation*}
 \begin{split}
 L &= \mu_n,\\
 T &= \mu_n\sqrt[n]{q},\, \textrm{and}\\
 \epsilon &= e_n^{-1}: L\overset{\simeq}{\longrightarrow} \bigwedge^2 E[n],
\end{split}
\end{equation*}
the last equality of which is due to Remark \ref{TateE}.

Note that the action of $G_K$ on $E[n]$ gives rise to an isomorphism
\[ g:\mathrm{Gal}(K(\mu_n, \sqrt[n]{q})/K(\mu_n))\overset{\simeq}{\longrightarrow} M. \] 
We also remark that the composite map $f\circ g:\mathrm{Gal}(K(\mu_n, \sqrt[n]{q})/K(\mu_n))\overset{\simeq}{\longrightarrow} M \overset{\simeq}{\longrightarrow} L =\mu_n$ is the Kummer character $h_1$ mapping $\sigma \in \mathrm{Gal}(K(\mu_n, \sqrt[n]{q})/K(\mu_n))$ to $\sigma (\sqrt[n]{q})/\sqrt[n]{q}$.

\begin{Prop}
\label{mu}
The composition $w_n\circ \tau: T\rightarrow \mu_n \sqrt[n]{\Delta_E}$ is an isomorphism of $L=\mu_n$-torsors over $K$.
\end{Prop}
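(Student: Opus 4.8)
The plan is to verify that $w_n\circ\tau$ is a $G_K$-equivariant, $\mu_n$-equivariant bijection, these being exactly the conditions for an isomorphism of $\mu_n$-torsors over $K$. Two of the three properties are already in hand. Bijectivity follows from Lemma \ref{tau}(3) together with Lemma \ref{cubic} (for $n=3$) and Corollary \ref{bij4} (for $n=4$). The $G_K$-equivariance of $w_n$ is part of its construction, and $\tau$ is $G_K$-equivariant because $L=\mu_n$ is $G_K$-stable and, by Lemma \ref{tau}(1), $\tau$ is independent of the chosen generator of $L$ (so $g\cdot\tau(Q)=\tau(g\cdot Q)$, using $gP$ as generator). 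Thus the whole content is the compatibility of the $\mu_n$-actions: with $L=\mu_n$ acting on $T$ by translation and $\mu_n$ acting on $\mu_n\sqrt[n]{\Delta_E}$ by multiplication, I must show $w_n(\tau(a\cdot t))=a\cdot w_n(\tau(t))$ for all $a\in\mu_n$ and $t\in T$.

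First I would cut this down to a single instance. Writing $\phi=w_n\circ\tau$, it suffices to prove $\phi(\zeta_n\cdot t_0)=\zeta_n\cdot\phi(t_0)$ for the generator $\zeta_n$ and one base point $t_0$: iterating handles all of $\mu_n$, and the choice of $t_0$ is immaterial because $G_{K(\mu_n)}=\mathrm{Gal}(\bar K/K(\mu_n))$ acts transitively on $T$ (it acts through $M\cong L$ by the simply transitive translation action of Subsection 3.3, via the isomorphism $g$ and the surjective Kummer character $h_1=f\circ g$). Indeed, since $\phi$ is $G_{K(\mu_n)}$-equivariant and $\zeta_n$ is fixed by $G_{K(\mu_n)}$, for any $\sigma$ one gets $\phi(\zeta_n\cdot\sigma t_0)=\sigma\bigl(\phi(\zeta_n\cdot t_0)\bigr)=\sigma(\zeta_n\phi(t_0))=\zeta_n\cdot\phi(\sigma t_0)$. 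It is harmless to carry out the remaining computation after the base change $q\mapsto z^n$ to $L=\mathbb{Q}(\zeta_n)((z))$ of Remark \ref{TateE}, where the extension (\ref{TateEx}) splits, every torsion point is represented by a monomial $\zeta_n^a z^b$ in $\bar L^\times/q^{\mathbb Z}$, the group law is multiplication modulo $q^{\mathbb Z}=z^{n\mathbb Z}$, and $\sqrt[n]{q}=z$.

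The decisive simplification is that elements of $\mu_n\sqrt[n]{\Delta_E}$ are distinguished by a single coefficient. Since $\Delta_E=q\prod_{m\ge1}(1-q^m)^{24}=z^n\prod_{m\ge1}(1-z^{nm})^{24}$ and $\prod_{m\ge1}(1-z^{nm})^{24}$ is a unit with constant term $1$, every $n$-th root of $\Delta_E$ has the form $\zeta_n^{\,j}\,z\,(1+O(z^n))$, and two such roots coincide if and only if their coefficients of $z^1$ agree. Hence I only need the leading ($z^1$) coefficient of $\phi$. For $n=3$, taking $t_0=z$ we have $\tau(z)=[\,\zeta_3,z\,]$, whose four projective points are $\overline{\zeta_3},\overline{z},\overline{\zeta_3 z},\overline{\zeta_3 z^2}$; feeding their $x$-coordinates, computed to leading order from the series $x(u,q)$ of Theorem \ref{FactTate} (one finds $x(\zeta_3,q)=-\tfrac13+O(z^3)$, $x(z,q)=z+O(z^2)$, $x(\zeta_3 z,q)=\zeta_3 z+O(z^2)$, $x(\zeta_3 z^2,q)=\zeta_3^2 z+O(z^2)$), into $w_3=b_4-3(x_1x_2+x_3x_4)$ and using that the $b_4$-term is $O(z^3)$ yields $\phi(z)=z+O(z^2)$. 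Repeating with $t_0$ replaced by $\zeta_3 z$ (points $\overline{\zeta_3},\overline{\zeta_3 z},\overline{\zeta_3^2 z},\overline{z^2}$) gives $\phi(\zeta_3 z)=\zeta_3 z+O(z^2)$, so the ratio of leading coefficients is exactly $\zeta_3$, which is the required identity.

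For $n=4$ the same scheme applies, now using $\tau(Q)=[Q,P,-(P+Q)]$ and substituting the leading terms of both $x(u,q)$ and $y(u,q)$ into the formula for $\tilde w_4$ in Corollary \ref{4th}. The main obstacle I expect is precisely the bookkeeping here: one must track the $y$-coordinates and the triple of auxiliary points $P',Q',R'$, and confirm that translation by the generator of $L=\mu_4$ multiplies the leading coefficient by exactly $\zeta_4$ rather than by $\zeta_4^{-1}$ or $-\zeta_4$. This is the point at which the chosen orientation of the Weil pairing, $e_n(\zeta_n,z)=\zeta_n$ from Remark \ref{TateE}, must be reconciled with the identification $\epsilon=e_n^{-1}$ built into $\tau$; making these sign conventions agree, rather than any individual power-series estimate, is the delicate step.
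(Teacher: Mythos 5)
Your overall strategy (reduce $\mu_n$-equivariance to a single instance $\phi(\zeta_n t_0)=\zeta_n\phi(t_0)$ via the transitive action of $G_{K(\mu_n)}$ on $T$, then verify that instance by comparing leading $z$-coefficients) is sound, and your $n=3$ computation is correct. But for $n=4$ you have not actually proved the statement: you describe the required computation of the leading coefficients of $w_4(\tau(z))$ and $w_4(\tau(iz))$ via the series for $x(u,q)$ and $y(u,q)$, and then you yourself flag that "making these sign conventions agree \ldots is the delicate step" without carrying it out. That verification is precisely the hard content in your approach --- it is of the same order of difficulty as the computation the paper performs in Proposition \ref{cru} (where $w_4([z,i,(iz)^{-1}])\equiv z \pmod{z^2}$ is established over roughly a page) --- so leaving it as an expectation is a genuine gap, not bookkeeping.

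The paper's proof shows that no such computation is needed for this proposition. Since $\Delta/q=\prod_{m\ge1}(1-q^m)^{24}$ is an $n$-th power in $K^\times$ (as $n\mid 24$), the Kummer characters $h_1,h_2:\mathrm{Gal}(K(\mu_n,\sqrt[n]{q})/K(\mu_n))\to\mu_n$ attached to $\sqrt[n]{q}$ and $\sqrt[n]{\Delta}$ coincide; the translation action of $L=\mu_n$ on $T$ is realized by this Galois group through $h_1$ (via $f\circ g$), and its action on $\mu_n\sqrt[n]{\Delta}$ through $h_2$, so $G_K$-equivariance of $w_n\circ\tau$ alone forces compatibility with $h_2\circ h_1^{-1}=\mathrm{id}$, i.e.\ $\mu_n$-equivariance. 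In other words, the sign ambiguity you are worried about cannot affect whether $w_n\circ\tau$ is a torsor isomorphism (both $\delta$ and $-\delta$ are isomorphisms of $\mu_4$-torsors); it only matters for the later identification $w_4\circ\tau=\delta$, where a single leading-coefficient evaluation suffices. I would either complete the $n=4$ computation for both base points, or replace the computational step by the Kummer-character identity above.
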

\begin{proof}
It is obvious from the constructions that $w_n\circ \tau$ is $G_K$-equivariant.
We show that it is also $\mu_n$-equivariant.
By definition, the action of $L$ on $T$ is identified with the action of $M$ on $T$ via the isomorphism $\tilde{\varphi}:L\overset{\simeq}{\longrightarrow} M$,
which is also identified with the action of $\mathrm{Gal}(K(\mu_n, \sqrt[n]{q})/K(\mu_n))$ on $T$ via $g:\mathrm{Gal}(K(\mu_n, \sqrt[n]{q})/K(\mu_n))\overset{\simeq}{\longrightarrow} M$.
On the other hand, the action of $G_K$ on $\mu_n\sqrt[n]{\Delta}$ induces an action of $\mathrm{Gal}(K(\mu_n, \sqrt[n]{q})/K(\mu_n))$ on  $\mu_n\sqrt[n]{\Delta}$,
 which is identified with a canonical action of $\mu_n$ on $\mu_n\sqrt[n]{\Delta}$ via the Kummer character $h_2:\mathrm{Gal}(K(\mu_n, \sqrt[n]{q})/K(\mu_n))\overset{\simeq}{\longrightarrow} \mu_n$ mapping $\sigma$ to $\sigma (\sqrt[n]{\Delta})/\sqrt[n]{\Delta}$.
Since $w_n\circ \tau$ is $G_K$-equivariant, it is also $\mathrm{Gal}(K(\mu_n, \sqrt[n]{q})/K(\mu_n))$-equivariant.
These arguments imply that the map $w_n\circ \tau: T\rightarrow \mu_n \sqrt[n]{\Delta_E}$ is compatible with $h_2\circ (f\circ g)^{-1}=h_2\circ h_1^{-1}: L\rightarrow \mu_n$, which is the identity map since $\sqrt[n]{\Delta}/\sqrt[n]{q}\in K(\mu_n)^{\times}$ by (\ref{TD}).
Therefore, $w_n\circ \tau$ is $L=\mu_n$-equivariant.
\end{proof}

\begin{Cor}
\label{CorComp}
The map $w_n$ is compatible with the normalized Weil pairing $e_n$.
\end{Cor}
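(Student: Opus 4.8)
The plan is to read the statement off directly from Proposition \ref{mu}, transporting the $\mu_n$-equivariance of $w_n\circ\tau$ across the bijection $\tau$. Recall that, in the sense of the Convention, ``compatibility of $w_n$ with $e_n$'' means
\[ w_n(\alpha\cdot x)=e_n(\alpha)\cdot w_n(x)\qquad (\alpha\in\textstyle\bigwedge^2 E[n],\ x\in T_n(E[n])), \]
where $e_n:\bigwedge^2 E[n]\to\mu_n$ is the isomorphism induced by the alternating normalized Weil pairing. The only ingredients I would use are the identifications fixed at the start of this subsection, namely $L=\mu_n$ and $\epsilon=e_n^{-1}:L\xrightarrow{\simeq}\bigwedge^2 E[n]$ coming from Remark \ref{TateE}, together with the facts that $\tau$ is compatible with $\epsilon$ (Lemma \ref{tau}(2)) and bijective (Lemma \ref{tau}(3)).

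Concretely, I would fix $\alpha\in\bigwedge^2 E[n]$ and $x\in T_n(E[n])$, put $\lambda:=e_n(\alpha)\in\mu_n=L$ so that $\alpha=\epsilon(\lambda)$, and use bijectivity of $\tau$ to write $x=\tau(t)$ for the unique $t\in T$. Compatibility of $\tau$ with $\epsilon$ then gives
\[ \alpha\cdot x=\epsilon(\lambda)\cdot\tau(t)=\tau(\lambda\cdot t). \]
Applying $w_n$ and invoking Proposition \ref{mu}, which asserts precisely that $w_n\circ\tau$ is an isomorphism of $\mu_n$-torsors and hence $\mu_n$-equivariant, I obtain
\[ w_n(\alpha\cdot x)=(w_n\circ\tau)(\lambda\cdot t)=\lambda\cdot(w_n\circ\tau)(t)=e_n(\alpha)\cdot w_n(x). \]
Since $\alpha$ and $x$ were arbitrary, this is exactly the asserted compatibility, and the corollary follows.

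There is no serious computation here; the whole content sits in Proposition \ref{mu}. The single point that needs care is the bookkeeping around the direction of the identification $\epsilon=e_n^{-1}$: because $\epsilon$ runs from $\mu_n$ into $\bigwedge^2 E[n]$ while the Weil pairing runs the other way, one must substitute $\lambda=e_n(\alpha)$ (and not $\alpha=e_n(\lambda)$) so that the $\mu_n$-action supplied by Proposition \ref{mu} lines up with the factor $e_n(\alpha)$ demanded by the statement. I would also spell out that ``isomorphism of $\mu_n$-torsors'' in Proposition \ref{mu} is synonymous with $\mu_n$-equivariance, as this is what justifies the middle equality in the last display.
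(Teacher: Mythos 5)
Your argument is correct and is exactly the paper's proof, merely written out in full: the paper also deduces the corollary by combining Lemma \ref{tau} (2), (3), Proposition \ref{mu}, and the identification $e_n=\epsilon^{-1}$. The careful bookkeeping you do with $\lambda=e_n(\alpha)$ versus $\alpha=\epsilon(\lambda)$ is the right way to make the one-line citation in the paper precise.
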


\begin{proof}
This immediately follows from Corollary \ref{tau} (2) (3), Proposition \ref{mu}, and $e_n=\epsilon^{-1}$.
\end{proof}

Next we consider the following canonical isomorphism 
\begin{equation*}
 \begin{split}
\delta :  T &\rightarrow \mu_n\sqrt[n]{\Delta_E}\\
   z & \mapsto  z\prod_{m\geq 1}(1-q^m)^{24/n}
 \end{split}
\end{equation*}
of $L=\mu_n$-torsors over $K$.

\begin{Prop}
\label{cru}
We have $w_n\circ \tau=\delta$.
\end{Prop}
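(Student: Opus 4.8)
The plan is to exploit the rigidity of torsor isomorphisms and reduce everything to a single scalar. Both $w_n\circ\tau$ and $\delta$ are isomorphisms of $\mu_n$-torsors over $K$: the former by Proposition \ref{mu}, the latter directly from its definition, since $\prod_{m\ge1}(1-q^m)^{24/n}\in K^\times$ and, by (\ref{TD}), $(\sqrt[n]{q})^n\prod_{m\ge1}(1-q^m)^{24}=\Delta$, so $\delta$ indeed lands in $\mu_n\sqrt[n]{\Delta_E}$. Any two isomorphisms of $\mu_n$-torsors over $K$ differ by a unique scalar $\zeta\in\mu_n(K)$: writing $(w_n\circ\tau)(t)=\zeta\,\delta(t)$, the $\mu_n$-equivariance forces $\zeta$ to be independent of $t\in T$ (as $T$ is a single $\mu_n$-orbit), and the $G_K$-equivariance forces $\sigma(\zeta)=\zeta$ for all $\sigma\in G_K$, i.e. $\zeta\in\mu_n(K)$. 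Thus the whole proposition reduces to proving $\zeta=1$.

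Next I would compute $\mu_n(K)$ for $K=\mathbb{Q}((q))$ by inspecting lowest-order Laurent coefficients: a primitive cube or fourth root of unity in $K$ would reduce to one in $\mathbb{Q}$, which is impossible. Hence $\mu_3(K)=\{1\}$ and $\mu_4(K)=\{\pm1\}$. For $n=3$ this already forces $\zeta=1$ and finishes the proof. For $n=4$ it only gives $\zeta=\pm1$, so it remains to pin down the sign, and this is where an explicit computation becomes unavoidable.

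For $n=4$ I would evaluate both maps on the single convenient point $t_0=\sqrt[4]{q}=q^{1/4}\in T$. On one side, $\delta(q^{1/4})=q^{1/4}\prod_{m\ge1}(1-q^m)^{6}$, whose leading term is $+q^{1/4}$. On the other side, by definition $\tau(q^{1/4})=[Q,P,-(P+Q)]$, where $Q,P,-(P+Q)$ are the Tate points with parameters $q^{1/4}$, $i$, $-iq^{-1/4}$ under the parametrization of Theorem \ref{FactTate}, so $(w_4\circ\tau)(q^{1/4})=\tilde{w}_4(Q,P,-(P+Q))$ is given by the formula in Corollary \ref{4th}. The one subtlety to respect is that Corollary \ref{4th} presupposes a Weierstrass equation $y^2=x^3+\cdots$, whereas the Tate parametrization is written for $y^2+xy=\cdots$; so before substituting I would pass to $\tilde{y}:=y+\tfrac12 x$, under which $x$ is unchanged while $\tilde{y}(u^{-1})=-\tilde{y}(u)$ and $\tilde{y}(qu)=\tilde{y}(u)$. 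I would then expand each of the three chord-slopes $(\tilde{y}_{P_i}-\tilde{y}_{P_i'})/(x_{P_i}-x_{P_i'})$ to leading order in $q^{1/4}$, using the periodicity $x(qu)=x(u)$ and inversion $x(u^{-1})=x(u)$ to select the dominant term of each doubly-infinite series. A short computation yields the three slopes $\tfrac12,\ \tfrac{i}{2},\ -2iq^{1/4}$ (their squares can be cross-checked against Lemma \ref{lem4}), whence $\tilde{w}_4=2\cdot\tfrac12\cdot\tfrac{i}{2}\cdot(-2iq^{1/4})=q^{1/4}+\cdots$, with leading coefficient $+1$. Matching this with $\delta(q^{1/4})$ and using $\zeta=\pm1$ forces $\zeta=1$.

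The main obstacle is exactly this last leading-order computation for $n=4$: the bookkeeping is delicate because one must simultaneously carry out the coordinate change to $\tilde{y}$ and, for each of the six points involved, correctly identify which term of the Tate series dominates (equivalently, replace each parameter by a representative of valuation in $[0,1)$ and invoke the inversion symmetry). Producing the leading coefficient exactly $+1$ rather than $-1$ is the crux of the argument, since it is only the sign that distinguishes $\delta$ from $-\delta$; everything else is formal.
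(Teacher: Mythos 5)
Your proposal is correct and follows essentially the same route as the paper: reduce to a scalar $\zeta\in\mu_n(K)$ via Proposition \ref{mu} (which settles $n=3$ since $\mu_3(\mathbb{Q}((q)))=\{1\}$), then for $n=4$ pin down the sign by passing to the coordinate $\tilde{y}=y+\tfrac12 x$ and computing the leading term of $\tilde{w}_4$ at the point $\tau(q^{1/4})=[q^{1/4},\,i,\,(iq^{1/4})^{-1}]$. Your three leading slopes $\tfrac12,\ \tfrac{i}{2},\ -2iq^{1/4}$ agree with the paper's $\tfrac12,\ \tfrac{i}{2},\ \tfrac{2z}{i}$ (with $z=q^{1/4}$), giving the same leading coefficient $+1$ and hence $\zeta=1$.
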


\begin{proof}
By Proposition \ref{mu}, the composite map $\sigma:= (w_n\circ \tau)\circ \delta^{-1}$ is an automorphism of the $\mu_n$-torsor $\mu_n\sqrt[n]{\Delta_E}$ over $K$,
and hence $\sigma$ belongs to $\mu_n(K)\subset \mathrm{Aut}(\mu_n\sqrt[n]{\Delta_E})$.
Because $\mu_n(K)=\{ 1\}, \{\pm 1\}$ for $n=3, 4$ respectively, the assertion for $n=3$ is proved, but there remains a possibility of $w_4\circ \tau=-\delta$.

To determine the sign of $\sigma$ for $n=4$, 
we check that the first coefficient of $w_4(\tau(z))$, which belongs to $\mathbb{Q}(i)((z))$, coincides with that of $\delta(z)=z\prod_{m\geq 1}(1-q^m)^6$; that is, $z$.
Note that, if $(P,Q,R)=\tau(z)=(z,i,(iz)^{-1})$ and if $P',Q',R'$ are as in Corollary \ref{4th}, then we obtain $(P',Q',R')=(-z,-iz^2,-iz)$.

Change variables of $E$ as
\[ X=x \quad \mathrm{and} \quad Y=y+\frac{1}{2}x\]
to make the Weierstrass equation of $E$ into the form
\[ Y^2=X^3+A_2X^2+A_4X+A_6. \] 
Then, it follows from Theorem 2.9 that
\begin{equation*}
 \begin{split}
\bar{K}^{\times}\big{/}q^\mathbb{Z} &\stackrel{\simeq}{\longrightarrow}
  \left\{ \begin{tabular}{l}
    $\bar{K}$-valued points on\\
    $y^2+xy=x^3+a_4x+a_6$
  \end{tabular} \right\}
\stackrel{\simeq}{\longrightarrow} 
 \left\{ \begin{tabular}{l}
    $\bar{K}$-valued points on\\
    $Y^2=X^3+A_2X^2+A_4X+A_6$
  \end{tabular} \right\} \\
u ~~~~&\longmapsto ~~~~~~ (x(u,q), y(u,q)) ~~~~~~~\longmapsto ~~~(x(u,q),\, y(u,q)+\frac{1}{2}x(u,q))
 \end{split}
\end{equation*}
We have
\[ x(u) := x(u,z^4)= f(u) + \sum_{n\geq 1}\left( \frac{z^{4n}u}{(1-z^{4n}u)^2} + \frac{z^{4n}u^{-1}}{(1-z^{4n}u^{-1})^2}-2\frac{ n z^{4n}}{1-z^{4n}} \right),\]
where $f(u)=\frac{u}{(1-u)^2}\equiv u+2u^2$ $(\mathrm{mod}\, z^3)$.
Also, set
\begin{equation*}
\begin{split}
Y(u) &:= y(u,z^4)+\frac{1}{2}x(u,z^4)= \frac{1}{2}\sum_{n\in \mathbb{Z}} \frac{z^{4n}u(1+z^{4n}u)}{(1-z^{4n}u)^2}\\
        &= g(u) + \frac{1}{2}\sum_{n\geq 1}\left( \frac{z^{4n}u^{-1}(z^{4n}u^{-1}+1)}{(z^{4n}u^{-1}-1)^3}+
\frac{z^{4n}u(1+z^{4n}u)}{(1-z^{4n}u)^3}\right),
\end{split}
\end{equation*}
where $g(u)=u(1+u)/2(1-u)^3\equiv\frac{1}{2}u+2u^2$ (mod $z^3$).
Note that the second term of $x(u)$ and $Y(u)$ is congruent to $0 \, (\mathrm{mod}\, z^2)$ whenever the degree of $u\in \mathbb{Q}(i)((z))$ in $z$ is of $0,\pm 1,\pm2$, and that $f(u)=f(u^{-1})$ and $g(u)=-g(u^{-1})$.
Then, the computation goes as follows:
\begin{equation}
 \begin{split}
w_4([z,i,iz]) &= 2\frac{Y(z)-Y(-z)}{x(z)-x(-z)}\cdot\frac{Y(i)-Y(-iz^2)}{x(i)-x(-iz^2)}\cdot\frac{Y((iz)^{-1})-Y(-iz)}{x((iz)^{-1})-x(-iz)}\\
           &\equiv 2\frac{g(z)-g(-z)}{f(z)-f(-z)}\cdot\frac{g(i)-g(-iz^2)}{f(i)-f(-iz^2)}\cdot\frac{g((iz)^{-1})-g(-iz)}{f((iz)^{-1})-f(-iz)}\quad (\mathrm{mod}\, z^2).
 \end{split}
\end{equation}
For each factor in the right hand side of (4.14), we see that
\begin{equation*}
 \begin{split}
\frac{g(z)-g(-z)}{f(z)-f(-z)} &\equiv \frac{z}{2z}\quad (\mathrm{mod}\, z^2)\\
      &\equiv \frac{1}{2}\quad (\mathrm{mod}\, z^2),
 \end{split}
\end{equation*}
\begin{equation*}
 \begin{split}
\frac{g(i)-g(-iz^2)}{f(i)-f(-iz^2)} &\equiv \frac{g(i)}{f(i)} \quad (\mathrm{mod}\, z^2)\\
      &\equiv \frac{i}{2}\quad (\mathrm{mod}\, z^2),
 \end{split}
\end{equation*}
and
\begin{equation*}
 \begin{split}
\frac{g((iz)^{-1})-g(-iz)}{f((iz)^{-1})-f(-iz)} &= \frac{-g(iz)-g(-iz)}{f(iz)-f(-iz)}\\
      &\equiv \frac{2z}{i}\quad (\mathrm{mod}\, z^3).
 \end{split}
\end{equation*}
Therefore,
\begin{equation*}
 \begin{split}
w_4([z,i,(iz)^{-1}]) &\equiv 2\cdot\frac{1}{2}\cdot\frac{i}{2}\cdot\frac{2z}{i}\quad (\mathrm{mod}\, z^2)\\
				  &\equiv z\quad (\mathrm{mod}\, z^2).
 \end{split}
\end{equation*}
Therefore, we have $w_4([z,i,(iz)^{-1}])=z\prod_{m\geq 1}(1-q^m)^6$.
\end{proof}

\section{The main theorem}
Combining the results of the previous two sections,
we now state our main theorem.
In the following statement, we consider $T_n(E[n])$ as a $\mu_n$-torsor via the action of $\bigwedge^2V$ on $T_n(E[n])$ by the identification $e_n: \bigwedge^2V \overset{\simeq}{\longrightarrow} \mu_n $ with $e_n$  the normalized Weil pairing.
\begin{Thm}
\label{MT}
Let $n $ be $3$ or $4$.
 
(1) The family of maps
\[ \bigl(w_n: T_n(E[n]) \longrightarrow \mu_n\sqrt[n]{\Delta_E}\bigr)_{E/S} \]
 defined in Section 3 with $E$ an elliptic curve over a scheme $S$ on which $n$ is invertible satisfies the following properties:\\
(a) Each map $w_n$ is an isomorphism of $\mu_n$-torsors over $S$.\\
(b) The maps $w_n\textrm{'s}$ are compatible with arbitrary base change.\\
(c) If $E=E_q$ is the Tate curve over $K=\mathbb{Q}((q))$, then the map $w_n$ coincides with $\delta\circ \tau^{-1}$.

(2) If $n=3$, then there exists a unique family of maps $\bigl(T_3(E[n])\overset{\simeq}{\longrightarrow} \mu_3\sqrt[3]{\Delta_E}\bigr)_{E/S}$ satisfying (a) and (b) with $E$ an elliptic curve over a scheme $S$ with 3 invertible. Hence it coincides with $\bigl(w_3: T_3(E[3])\rightarrow \mu_3\sqrt[3]{\Delta_E}\bigr)_{E/S}$ and automatically satisfies the property (c).

(3) If $n=4$, then $(w_4)_{E/S}$ and $(-w_4)_{E/S}$ are the only families of maps $T_n(E[4]) \longrightarrow \mu_4\sqrt[4]{\Delta_E}$ satisfying (a) and (b) with $E$ an elliptic curve over a scheme $S$ with $4$ invertible. Also, $(w_4)_{E/S}$ is the only family which satisfies the properties (a), (b), and (c).
\end{Thm}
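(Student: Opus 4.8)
The plan is to derive (1) directly from Sections 3 and 4 and then to deduce the uniqueness statements (2) and (3) by a rigidity argument on the modular curve $Y(n)$. Property (b) is immediate, since both $T_n(E[n])$ and $w_n$ are built only from the finite étale group scheme $E[n]$ and from a Weierstrass equation, and these are stable under base change (Remarks \ref{general3} and \ref{general4}). Property (c) is exactly Proposition \ref{cru}, which gives $w_n\circ\tau=\delta$, i.e.\ $w_n=\delta\circ\tau^{-1}$, for the Tate curve $E_q/\mathbb{Q}((q))$.

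For (a), I would first reduce an isomorphism statement to an equivariance statement. By Corollary \ref{SimpTr} together with the normalized Weil pairing $e_n\colon\bigwedge^2E[n]\xrightarrow{\sim}\mu_n$, the set $T_n(E[n])$ is a $\mu_n$-torsor, while $\mu_n\sqrt[n]{\Delta_E}$ is one by Kummer theory; since any morphism between torsors under the same group is automatically an isomorphism, it suffices to show that $w_n$ is $\mu_n$-equivariant, which is precisely the commutativity of the square in the Introduction.

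The main step—and the principal obstacle—is to promote this commutativity, which Corollary \ref{CorComp} supplies only for the single Tate curve $E_q/\mathbb{Q}((q))$, to every elliptic curve. Working étale-locally on $S$ one may assume $E[n]$ trivial, so that $(E,\alpha)$ yields a classifying morphism $S\to Y(n)$ and it is enough to treat the universal curve over $Y(n)$. There the two composites in the square are morphisms between schemes finite étale over $Y(n)$; since $Y(n)$ is connected and smooth over $\mathbb{Z}[1/n]$ (Theorem \ref{modular}(2)), hence integral, two such morphisms that agree over the generic point agree identically. Finally, $E_q$ over $L=\mathbb{Q}(\zeta_n)((z))$ (with $q=z^n$) carries the level structure of Remark \ref{TateE}, and its $j$-invariant $z^{-n}+\cdots$ is transcendental over $\mathbb{Q}$, so the classifying point $\operatorname{Spec}L\to Y(n)$ lands on the generic point; by Corollary \ref{CorComp} the square commutes there, hence everywhere, which proves (a).

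For uniqueness, let $(w_n')_{E/S}$ be a second family satisfying (a) and (b). For each $E/S$ the composite $w_n'\circ w_n^{-1}$ is an automorphism of the $\mu_n$-torsor $\mu_n\sqrt[n]{\Delta_E}$, hence multiplication by some $\zeta_{E/S}\in\mu_n(S)$, and (b) makes this natural in $(E/S)$. Restricting to the universal curve and using integrality of $Y(n)$ as above, $\zeta$ is a constant $\zeta_0\in\mu_n(\mathcal{O}(Y(n)))$. To evaluate $\zeta_0$, I would apply naturality to any elliptic curve $E$ over $\mathbb{Q}$ and the finite extension $K'=\mathbb{Q}(E[n])\supset\mathbb{Q}(\zeta_n)$ trivializing $E[n]$: base-change compatibility gives $\zeta_0=\zeta_{E/\mathbb{Q}}\big|_{K'}$ in $\mu_n(K')$, and since $\zeta_{E/\mathbb{Q}}\in\mu_n(\mathbb{Q})$ and $\mu_n(\mathbb{Q})\hookrightarrow\mu_n(K')$ is injective, $\zeta_0\in\mu_n(\mathbb{Q})=\mu_n(\mathbb{Z}[1/n])$. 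For $n=3$ this group is trivial, forcing $\zeta_0=1$ and proving (2); for $n=4$ it equals $\{\pm1\}$, so the only families are $(w_4)$ and $(-w_4)$. Since $(-w_4)$ sends $\tau(z)$ to $-\delta(z)$ while Proposition \ref{cru} gives $w_4\circ\tau=\delta$, only $(w_4)$ satisfies (c), which completes (3).
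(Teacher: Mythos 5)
Your argument is correct and follows the paper's overall architecture --- reduce everything to the universal curve over the connected modular curve $Y(n)$ and evaluate at the Tate curve via Proposition \ref{cru} / Corollary \ref{CorComp} --- but it diverges from the paper in two places. For (a), the paper first proves a purely group-theoretic statement (Lemma \ref{factor}): conjugation by the bijection $w_n$ carries the subgroup $\bigwedge^2E[n]\subset\mathrm{Aut}(T_n(E[n]))$ onto $\mu_n\subset\mathrm{Aut}(\mu_n\sqrt[n]{\Delta_E})$, so $w_n$ is automatically compatible with \emph{some} isomorphism $e_n'$, and the whole problem collapses to comparing two characters $e_n,e_n'\colon Y(n)\to\mu_n$, which connectedness of $Y(n)$ plus the Tate-curve value settles. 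You instead compare the two composites of the action square directly as morphisms of finite \'etale $Y(n)$-schemes; this is valid and even spares you Lemma \ref{factor}, but note that the source of those morphisms is disconnected, so the clean way to finish is that its generic fibre meets every connected component (integrality of $Y(n)$ plus flatness), together with your correct observation that the Tate point is generic because $j=z^{-n}+\cdots$ is transcendental. For the uniqueness statements (2) and (3), the paper descends the ratio $W_1/W_2$ to the \emph{coarse} moduli $\mathbb{A}^1_{\mathbb{Z}[1/n]}=\mathrm{Spec}\,\mathbb{Z}[1/n,j]$, where $\mu_n(\mathbb{Z}[1/n,j])$ is already $\{1\}$ or $\{\pm1\}$ and nothing more is needed; you stay on the fine moduli $Y(n)$, where this fails ($\zeta_n\in\mathcal{O}(Y(n))$, so $\mu_n(\mathcal{O}(Y(n)))$ is all of $\mu_n$), and you correctly compensate by pinning down the connected component of $\mu_n$ through an evaluation at an elliptic curve defined over $\mathbb{Q}$. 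Both routes work; the coarse-moduli descent is the more economical, while your version has the merit of never leaving the level-$n$ moduli space already used for part (a).
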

\begin{proof}
(1)
Since (b) is obvious from our construction of $w_n$ and (c) follows from Proposition \ref{cru}, we have only to show (a) for $w_n$.
To do this, we may assume that $E[n]$ (and hence $\mu_n$) is constant over $S$.
Let $c_n$ be the conjugation
\[\mathrm{Aut}(T_n(E[n]))\overset{\simeq}{\longrightarrow} \mathrm{Aut}(\mu_n\sqrt[n]{\Delta_E}) \]
by $w_n$.
We consider $\bigwedge^2 E[n]$ as a subgroup of $\mathrm{Aut}(T_n(E[n]))$ by (3.1) and (3.3).
We also consider $\mu_n$ as a subgroup of $\mathrm{Aut}(\mu_n\sqrt[n]{\Delta_E})$ by the canonical action of $\mu_n$ on $\mu_n\sqrt[n]{\Delta_E}$.
For the proof, we first show the following lemma:
\begin{Lem}
\label{factor}
(1) The restriction of  the map $c_n$ to $\bigwedge^2 E[n]$ induces an isomorphism $e_n':\bigwedge^2 E[n]\overset{\simeq}{\longrightarrow} \mu_n$.

(2) The isomorphism $e_n':\bigwedge^2 E[n]\overset{\simeq}{\longrightarrow} \mu_n$ is the unique map with which $w_n$ is compatible.
\end{Lem}
\begin{proof}
(1)
If $n=3$, then the claim follows from the fact that $\mathrm{Aut}(T_3(E[3]))\simeq \mathrm{Aut}(\mu_3\sqrt[3]{\Delta_E})\simeq \mathfrak{S}_3$ has the unique subgroup of order 3; the alternating group.
If $n=4$, by Proposition \ref{SLab}, we have $\bigwedge^2E[4] =C(-1)\subset \mathrm{Aut}(T_4(E[4]))$ and $\mu_4=C(-1)\subset \mathrm{Aut}(\mu_4\sqrt[4]{\Delta_E})$.
Thus, it suffices to check that $c_4$ maps $-1\in \mathrm{Aut}(T_4(E[4]))$ to $-1\in \mathrm{Aut}(\mu_4\sqrt[4]{\Delta_E})$;
that is, $w_4(-X)=-w_4(X)$ for $X\in T_4(E[4])$.
This follows from Remark \ref{ch4}.

(2)
The compatibility is obvious from the definition, and the uniqueness follows because $\mu_n\rightarrow \mathrm{Aut}(\mu_n\sqrt[n]{\Delta_E})$ is injective.
\end{proof}

We continue the proof of the theorem.
To show the claim (1), we check that $e_n=e_n'$.

Denote $A:=\Gamma (Y(n),\mathcal{O})$ and let $(E_0/A, (P_0, Q_0))$ be the universal object of $\mathcal{M}(n)$ (see Theorem \ref{modular}).
Then the point $e_n'(P_0\wedge Q_0)\in \mu_n(A)$ defines a morphism $Y(n)\rightarrow \mu_n= \mathrm{Spec} [X, 1/n](X^n-1)$ of schemes over $\mathbb{Z}[1/n]$, which we also denote by $e_n'$.
This morphism satisfies the following property:
If $K$ is a field and $x=(E/K, (P,Q))\in Y(n)(K)$, then $e_n'(x)=e_n'(P\wedge Q)$.
In the same way, the point $e_n(P_0\wedge Q_0)\in \mu_n(A)$ also defines a morphism $e_n:Y(n)\rightarrow \mu_n$ of schemes over $\mathbb{Z}[1/n]$,
which satisfies $e_n(x)=e_n(P\wedge Q)$ for any $x$ as above.

Since $Y(n)$ is connected by Theorem \ref{modular},
the morphism $e_n/e_n': Y(n)\rightarrow \mu_n$ factors through a connected component $U$ of the scheme $\mu_n$ over $\mathbb{Z}[1/n]$.
Corollary \ref{CorComp} and Lemma \ref{factor} (2) imply that $e_n/e_n'(E_{z^n},(\zeta_n, z))=1$ for the Tate curve $E_{z^n}/\mathbb{Q}(\zeta_n)((z))$.
Therefore, $U$ must be the connected component $\mu_1=\mathrm{Spec}[X,1/n]/(X-1)$ of $\mu_n$.
This completes the proof of (1).

(2), (3) Suppose that we are given two maps $W_i: T_n(E[n])\rightarrow \mu_n\sqrt[n]{\Delta_E}$ ($i=1, 2$) for each elliptic curve $E/S$, which satisfy the condition (a) and (b) in the theorem.
Then $W_1/ W_2$ defines a morphism $W: \mathcal{M}\rightarrow \mu_n$ of functors $\mathit{Sch}/\mathbb{Z}[1/n]\rightarrow \mathit{Sets}$, where $\mathcal{M}$ is the functor defined in Theorem \ref{modular} (1).
By Theorem \ref{modular} (1), there exists a unique morphism $W': \mathbb{A}_{\mathbb{Z}[1/n]}^1\rightarrow \mu_n$ of schemes over $\mathbb{Z}[1/n]$ satisfying $W'\circ j=W: \mathcal{M}\rightarrow \mu_n$, where $j: \mathcal{M}\rightarrow \mathbb{A}_{\mathbb{Z}[1/n]}^1$ is given by taking the $j$ invariant.
The morphism $W'$ corresponds to an element of $\mu_n(\mathbb{Z}[1/n,j])=1, \{ \pm 1\}$ for $n=3,4$, respectively.
This proves the assertions.
\end{proof}

\begin{Rem}
\label{12}
{\rm
When $\mathrm{char}(k)\nmid 12$, we define $T_{12}(E[12])= T_3(E[3])\times T_4(E[4])$.
Then, $T_{12}(E[12])$ admits an action of ${\bigwedge}^2 E[12]\cong {\bigwedge}^2 E[3]\times{\bigwedge}^2 E[4]$, and Theorem 5.1 for $n=3,4$ immediately gives the analogous result for $n=12$.}
\end{Rem}

\section{An isogeny of an elliptic curve and the 12-th roots of its discriminant}
The following corollary gives a variant of a Coates' result [2, appendix].
The original result assumed that the characteristic of the base field is $0$.
\begin{Cor}
\label{Coa}
Let $E$ and $E'$ be elliptic curves over a field $K$ of characteristic $\mathrm{char}(K)\neq 2,3$,
and $\varphi :E\rightarrow E'$ be an isogeny over $K$.
If $d=\mathrm{deg} \varphi$ is prime to $12$, then we have $\Delta_E = (\Delta_{E'})^d$ in $K^{\times}/(K^{\times})^{12}$.
\end{Cor}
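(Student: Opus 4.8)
The plan is to use the main theorem, specifically the compatibility of $w_n$ with the Weil pairing, to translate the multiplicativity of discriminants under isogeny into the behavior of the Weil pairing under an isogeny. First I would reduce to the prime-power case: since $K^{\times}/(K^{\times})^{12}\cong K^{\times}/(K^{\times})^3\times K^{\times}/(K^{\times})^4$, it suffices to prove $\Delta_E=(\Delta_{E'})^d$ in $K^{\times}/(K^{\times})^n$ separately for $n=3,4$, and because $d$ is prime to $12$, it is in particular prime to each such $n$. Fixing $n\in\{3,4\}$, I would consider the isogeny $\varphi:E\to E'$ and its effect on the $n$-torsion. Since $d=\deg\varphi$ is prime to $n$, the restriction $\varphi:E[n]\to E'[n]$ is an \emph{isomorphism} of $G_K$-modules, hence induces an isomorphism $\bigwedge^2\varphi:\bigwedge^2 E[n]\to\bigwedge^2 E'[n]$.

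The key step is to compare the Weil pairings under $\varphi$. The standard compatibility of the Weil pairing with isogenies states that for $P,Q\in E[n]$ one has $e_{n,E'}(\varphi P,\varphi Q)=e_{n,E}(P,Q)^{\deg\varphi}=e_{n,E}(P,Q)^d$. In the language of the exterior square this says that the diagram relating $\bigwedge^2\varphi$ to the identifications $e_{n,E}:\bigwedge^2 E[n]\overset{\simeq}{\to}\mu_n$ and $e_{n,E'}:\bigwedge^2 E'[n]\overset{\simeq}{\to}\mu_n$ commutes up to the $d$-th power map on $\mu_n$. Now Theorem \ref{MT}(1)(a) supplies canonical isomorphisms of $\mu_n$-torsors $w_{n,E}:T_n(E[n])\to\mu_n\sqrt[n]{\Delta_E}$ and $w_{n,E'}:T_n(E'[n])\to\mu_n\sqrt[n]{\Delta_{E'}}$, each compatible with the respective Weil pairing. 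Because $\varphi:E[n]\to E'[n]$ is an isomorphism, the purely linear-algebraic construction of Section 3 gives an induced $G_K$-equivariant bijection $T_n(\varphi):T_n(E[n])\to T_n(E'[n])$, compatible with $\bigwedge^2\varphi$.

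Chaining these maps produces a $G_K$-equivariant bijection
\[
\mu_n\sqrt[n]{\Delta_E}\ \xrightarrow{\ w_{n,E}^{-1}\ }\ T_n(E[n])\ \xrightarrow{\ T_n(\varphi)\ }\ T_n(E'[n])\ \xrightarrow{\ w_{n,E'}\ }\ \mu_n\sqrt[n]{\Delta_{E'}},
\]
which by the three compatibilities above is compatible with the $d$-th power map $\mu_n\xrightarrow{(\cdot)^d}\mu_n$. Concretely, it carries the $\mu_n$-torsor $\mu_n\sqrt[n]{\Delta_E}$ to $\mu_n\sqrt[n]{\Delta_{E'}}$ twisted by $d$, i.e. to $\mu_n\sqrt[n]{(\Delta_{E'})^d}$. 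Under the Kummer isomorphism $K^{\times}/(K^{\times})^n\cong\mathrm{H}^1(K,\mu_n)$ this exactly says $\Delta_E=(\Delta_{E'})^d$ in $K^{\times}/(K^{\times})^n$. Doing this for $n=3$ and $n=4$ and recombining yields the claim in $K^{\times}/(K^{\times})^{12}$.

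The main obstacle I anticipate is making the bookkeeping of the Weil-pairing-compatibilities genuinely rigorous at the level of $\mu_n$-\emph{torsors} rather than just elements: one must verify that $T_n(\varphi)$ is compatible with $\bigwedge^2\varphi$ (immediate from the functoriality of the Section 3 construction, since $\varphi$ is an isomorphism of $\mathbb{Z}/(n)$-modules), and then that the resulting composite of torsor isomorphisms is compatible with $(\cdot)^d$ and not merely with some automorphism of $\mu_n$. Here the crucial input is the precise isogeny-invariance formula $e_{n,E'}(\varphi P,\varphi Q)=e_{n,E}(P,Q)^d$ for the normalized pairing; keeping the sign and the exponent correct, and confirming that twisting a $\mu_n$-torsor associated to $\Delta_{E'}$ by the $d$-th power corresponds to raising the Kummer class to the $d$-th power, is the delicate point. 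Everything else is formal once the Weil pairing behaves as expected under $\varphi$.
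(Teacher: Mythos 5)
Your proposal is correct and follows essentially the same route as the paper: both transport the torsor $\mu_n\sqrt[n]{\Delta_E}$ to $\mu_n\sqrt[n]{\Delta_{E'}}$ via $w_n$, the induced map $T_n(\varphi)$, and the isogeny formula $e_n(\varphi P,\varphi Q)=e_n(P,Q)^d$, the only cosmetic differences being that the paper works with $n=12$ at once (via $T_{12}=T_3\times T_4$) and explicitly invokes $d^2\equiv 1\pmod{12}$ to resolve the $d$ versus $d^{-1}$ direction of the twist that you flag as the delicate point. Since $d^2\equiv 1\pmod{12}$ for any $d$ prime to $12$, that ambiguity is harmless and your argument goes through.
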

\begin{proof}
By Theorem \ref{MT}, we have the following commutative diagram:
\[
\begin{CD}
\mu_{12}\times \mu_{12} \sqrt[12]{\Delta_{E}} @ >  >> \mu_{12} \sqrt[12]{\Delta_{E}} \\
  @A  e_{12}\times w_{12} AA    @ AA w_{12} A\\
{\bigwedge}^2 E[12]\times T(E[12]) @ >   >> T(E[12]) \\
  @V \wedge^2 \varphi \times T\varphi VV   @ VV T\varphi V\\
{\bigwedge}^2 E'[12]\times T(E'[12]) @ >   >> T(E'[12]) \\
  @V e_{12} \times w_{12}' VV   @ VV w_{12}' V\\
\mu_{12}\times \mu_{12} \sqrt[12]{\Delta_{E'}} @ >  >> \mu_{12} \sqrt[12]{\Delta_{E'}} \\
  @V (\cdot )^{12}\times  (\cdot )^{12} VV     @ VV  (\cdot )^{12} V\\
\mu_{12}\times \mu_{12} \sqrt[12]{\Delta_{E'}^d} @ >  >> \mu_{12} \sqrt[12]{\Delta_{E'}^{d}}.
\end{CD}
\]
Here, $(\cdot)^{12}$ denotes the 12-th power map, and
 the horizontal maps are the action map.
Also, the vertical maps in the above diagram are all bijective and $G_K$-equivariant.
Since $e_{12}\circ\wedge^2\varphi \circ e_{12}^{-1}=(\cdot )^d: \mu_{12}\rightarrow \mu_{12}$ (for example, see [8, III, Proposition 8.2])
and $d^2 \equiv 1 \, (\mathrm{mod}\, 12)$,
the outside rectangle in the above diagram becomes
\[
\begin{CD}
\mu_{12}\times \mu_{12} \sqrt[12]{\Delta_{E}} @ >  >> \mu_{12} \sqrt[12]{\Delta_{E}} \\
@V \rotatebox{90}{$\simeq$} V \mathrm{id}\times\sigma V @ V \rotatebox{90}{$\simeq$} V  (\cdot)^{12}\circ w_{12}'\circ T_{\varphi} w_{12}^{-1} V\\
\mu_{12}\times \mu_{12} \sqrt[12]{\Delta_{E'}^d} @ >  >> \mu_{12} \sqrt[12]{\Delta_{E'}^{d}}.
\end{CD}
\]
This implies that $\mu_{12}\sqrt[12]{\Delta_{E}}$ and $\mu_{12}\sqrt[12]{\Delta_{E'}^d}$ are isomorphic as $\mu_{12}$-torsors over $K$. 
Hence the assertion holds.
\end{proof}

\end{document}